\newtheorem*{theoremN}{Theorem}
\newtheorem{theorem}{Theorem}[section]
\newtheorem{proposition}[theorem]{Proposition}
\newtheorem{lemma}[theorem]{Lemma}
\newtheorem{thmx}{Theorem}
\theoremstyle{definition}
\newtheorem{remark}[theorem]{Remark}
\newtheorem{definition}[theorem]{Definition}
\newtheorem{construction}[theorem]{Construction}
\newtheorem{notation}[theorem]{Notation}
\newtheorem*{definitionI}{Definition}
\title{A genuine equivariant recognition principle for finite groups}
\author{Branko Juran \thanks{Department of Mathematical Sciences, University of Copenhagen, Denmark \\ bj@math.ku.dk}}
\date{}
\DeclareMathOperator{\Ar}{Ar}
\newcommand{\Alg}[2]{\mathrm{Alg}_{#1}\left( #2 \right)}
\newcommand{\Alggrp}[2]{\mathrm{Alg}_{#1}^{\mathrm{grp}}\left( #2 \right)}
\DeclareMathOperator{\BarC}{Bar}
\newcommand{\ParBigCat}{\underline{\mathrm{CAT}}}
\newcommand{\ParCat}{\underline{\mathrm{Cat}}}
\newcommand{\CC}{\underline{\mathcal C}}
\newcommand{\DD}{\underline{\mathcal{D}}}
\newcommand{\EE}{\underline{\mathcal E}}
\newcommand{\colors}{\underline{\mathrm{col}}}
\DeclareMathOperator*{\colim}{colim}
\DeclareMathOperator{\const}{const}
\newcommand{\Disk}[1]{\underline{\mathrm{Disk}}_{#1}}
\newcommand{\Diskiso}[1]{\underline{\mathrm{Disk}}_{#1}^{\mathrm{i}}}
\newcommand{\Diskisoext}[1]{\underline{\mathrm{Disk}}_{#1}^{\mathrm{i,e}}}
\newcommand{\Env}{\underline{\mathrm{Env}}}
\newcommand{\E}[1]{\mathbb{E}_{#1}}
\newcommand{\Eisoext}[1]{\E{V}^{\mathrm{i,e}}}
\newcommand{\Eiso}[1]{\E{#1}^{i}}
\newcommand{\fgt}[2]{\mathrm{fgt}^{#1}_{#2}}
\newcommand{\fgtOne}[1]{\mathrm{fgt}^{#1}}
\newcommand{\Fin}[1]{\underline{\mathrm{Fin}}^{#1}}
\newcommand{\FinN}{\underline{\mathrm{Fin}}}
\newcommand{\Fun}{\mathrm{Fun}}
\newcommand{\GFun}{\underline{\Fun}}
\newcommand{\Free}[1]{\mathrm{Free}^{#1}}
\newcommand{\FreeTwo}[2]{\mathrm{Free}^{#1}_{#2}}
\newcommand{\GSpaces}{\underline{\Spaces}}
\newcommand{\BGSpaces}{\underline{\Spaces}_\ast}
\newcommand{\grp}{\mathrm{grp}}
\newcommand{\grpcore}[1]{#1^{\mathrm{core}}}
\newcommand{\GrpCompl}[1]{\mathrm{GrpCompl}^{#1}}
\DeclareMathOperator{\map}{map}
\newcommand{\id}{\mathrm{id}}
\DeclareMathOperator{\lax}{lax}
\newcommand{\Man}[1]{\underline{\mathrm{Man}}_{#1}}
\newcommand{\ManUnfr}{\underline{\mathrm{Man}}}
\newcommand\noloc{%
  \nobreak
  \mspace{6mu plus 1mu}
  {:}
  \nonscript\mkern-\thinmuskip
  \mathpunct{}
  \mspace{2mu}
}
\newcommand{\Nm}[2]{\mathrm{Nm}_{#2}^{#1}}
\newcommand{\OO}{\underline{\mathcal O}}
\newcommand{\PP}{\underline{\mathcal P}}
\newcommand{\Op}[1]{\mathrm{Op}^{#1}}
\newcommand{\overslice}[2]{{#1}_{{/#2}}}
\newcommand{\op}{\mathrm{op}}
\newcommand{\R}{\mathbb R}
\newcommand{\res}[2]{\mathrm{res}^{#1}_{#2}}
\newcommand{\Set}{\underline{\mathrm{Set}}}
\newcommand{\Spaces}{\mathcal S}
\newcommand{\SymMonCat}[1]{\mathrm{Cat}^{#1-\otimes}}
\DeclareMathOperator{\triv}{triv}
\let\originalleft\left
\let\originalright\right
\renewcommand{\left}{\mathopen{}\mathclose\bgroup\originalleft}
\renewcommand{\right}{\aftergroup\egroup\originalright}
\begin{document}

\maketitle
\vspace{-25pt}
\begin{abstract}
    For $G$ a finite group and $V$ a finite dimensional real $G$-representation, there is a $G$-operad $\E{V}$ defined using embeddings of $V$-framed $G$-disks such that
    for any based $G$-space $X$, there is a naturally defined $\E{V}$-algebra structure on the $V$-fold space $\Omega^V X$.
    
    Given an $\E{V}$-algebra in $G$-spaces and a subgroup $H$ of $G$, the fixed points $A^H$ carry the structure of an $\E{\dim V^H}$-algebra in spaces. 
    We prove that an $\E{V}$-algebra is equivalent to a $V$-fold loop space if and only if $A^H$ is group-like for all $H$ such that $\dim V^H \ge 1$. This generalizes a result by Guillou and May by removing the assumption that $V$ contains a trivial summand. 
    They observed that the equivariant recognition principle follows from an equivariant version of the approximation theorem, stating that $\Omega^V \Sigma^V X$ is the free group-like $\E{V}$-algebra on a based $G$-space $X$. This has been proven by Hauschild in the case that $V$ contains a trivial summand and by Rourke and Sanderson in the case that $X$ is $G$-connected. Our proof proceeds by showing that the equivariant approximation theorem holds for all $G$-representations $V$ and all based $G$-spaces $X$. 
\end{abstract}
\tableofcontents

\section{Introduction}
When May \cite{may72} introduced the notion of an operad, one of the main motivations was to encode the multiplicative structure on the $n$-fold loop space $\Omega^n X$ of a based space~$X$ which gives rise to the multiplicative structure on the homotopy group $\pi_n(X)$ of that space. This $n$-fold loop space $\Omega^n X$ of a space $X$ admits a homotopy-coherent multiplicative structure, making it an \emph{$\E{n}$-algebra in spaces}. Those $\E{n}$-algebras later started appearing in other context, including higher algebra and the study of configuration spaces of manifolds.

One of the very first and fundamental result in the theory of those operads is May's recognition principle, classifying which of the $\E{n}$-algebras come from $n$-fold loop spaces:
\begin{theoremN}[May] An $\E{n}$-algebra in spaces $A$ is equivalent to an $n$-fold loop space if and only if $\pi_0(A)$ is a group.
\end{theoremN}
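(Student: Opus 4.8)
The plan is to prove the two directions separately, putting essentially all of the work into the hard direction and deducing it from an approximation theorem, which is exactly the strategy the paper will carry out equivariantly.

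One direction is immediate: if $A \simeq \Omega^n X$ then $\pi_0(A) \cong \pi_n(X)$, which is a group, so being an $n$-fold loop space forces $\pi_0(A)$ to be a group. For the converse, suppose $A$ is an $\E{n}$-algebra with $\pi_0(A)$ a group; I must build a based space $X$ with $A \simeq \Omega^n X$. Write $C_n$ for the free $\E{n}$-algebra monad on based spaces, so that $A$ is a $C_n$-algebra. The action of $\E{n}$ on $n$-fold loop spaces of suspensions furnishes a map of monads $\alpha_n \colon C_n \to \Omega^n \Sigma^n$, and the adjunction $\Sigma^n \dashv \Omega^n$ makes $\Sigma^n$ into a right $C_n$-functor through $\Sigma^n C_n \to \Sigma^n \Omega^n \Sigma^n \to \Sigma^n$. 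I can therefore form the two-sided bar construction $B_n A := B(\Sigma^n, C_n, A)$, an $(n-1)$-connected based space, together with a natural comparison map $\eta_A \colon A \to \Omega^n B_n A$ adjoint to the inclusion $\Sigma^n A \hookrightarrow B_n A$ of the space of $0$-simplices. Taking $X = B_n A$, the whole theorem reduces to showing that $\eta_A$ is an equivalence whenever $A$ is group-like.

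I would establish this in two stages. First, $\eta_A$ is a \emph{group completion} for every $C_n$-algebra $A$: on a free algebra $A = C_n Y$ the bar construction collapses, $B(\Sigma^n, C_n, C_n Y) \simeq \Sigma^n Y$, and $\eta_{C_n Y}$ is identified with the approximation map $\alpha_n \colon C_n Y \to \Omega^n \Sigma^n Y$, which the approximation theorem asserts to be a group completion. Resolving an arbitrary $A$ through free algebras by the monadic bar resolution $A \simeq \lvert B(C_n, C_n, A) \rvert$, and using that group completion is a homological condition, then propagates the statement to all $A$. Second, when $A$ is already group-like the group completion $\eta_A$ is automatically an equivalence: both $A$ and $\Omega^n B_n A$ are group-like $H$-spaces, hence simple spaces, and $\eta_A$ is a homology isomorphism inducing an isomorphism on $\pi_0$, so it is a weak equivalence. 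This produces the desired $A \simeq \Omega^n B_n A$.

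The main obstacle is concentrated in the first stage. The approximation theorem itself — that $\alpha_n$ is a group completion, equivalently that $C_n Y \simeq \Omega^n \Sigma^n Y$ for connected $Y$ — is the essential geometric input and is genuinely hard; and upgrading it from free algebras to all $A$ requires controlling the homology of $B_n A$ as the realization of a simplicial space, i.e.\ a Quillen-style group-completion theorem allowing one to commute the relevant homology localization past geometric realization. It is exactly here that the group-like hypothesis does its work, converting the homological group-completion statement into a genuine weak equivalence. This partition — an approximation theorem for the free objects, plus a group-completion mechanism to globalize it — is the blueprint the paper extends to arbitrary $G$-representations $V$.
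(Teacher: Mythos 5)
Your outline is correct, and it is essentially a reconstruction of the classical May--Segal argument that the paper cites for this statement rather than reproves (the connected case to May, the general case via Segal's approximation theorem); it also shares the paper's essential reduction, namely that everything is made to rest on the approximation theorem checked on free algebras and propagated through the monadic bar resolution. Where you genuinely diverge is in the propagation mechanism. You take the homological route: form $B_n A = B(\Sigma^n, C_n, A)$, identify the unit with the approximation map on free algebras, propagate through the bar resolution by commuting homology localization past geometric realization (a Quillen-style group completion theorem), and finish with the fact that a homology isomorphism between group-like, hence simple, spaces inducing an isomorphism on $\pi_0$ is a weak equivalence. The paper's own assembly (carried out for arbitrary $V$, but specializing to your setting when $G$ is trivial) instead invokes the $\infty$-categorical monadicity theorem: $\Omega^V$ restricted to connective spaces is conservative and preserves geometric realizations (\Cref{prop:monadicity-OmegaV}, \Cref{lem:monadicity-E-Omega}), the forgetful functor from group-like algebras is monadic (\Cref{prop:monadicity-forgetful}, \Cref{prop:grpcompl-exists}, \Cref{prop:grpcore}), and the approximation theorem identifies the free objects, so the equivalence $\left( \Spaces^G_\ast \right)_{\ge V} \cong \Alggrp{\E{V}}{\GSpaces}$ follows formally, with your two-sided bar description of the delooping recovered only a posteriori (\Cref{rem:description-BV}). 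Your route is more elementary and explicit, but it leans on the group completion theorem together with properness and simplicity arguments; the paper's route avoids homology entirely, and this is precisely what makes it viable equivariantly, where the fixed points $A^H$ with $\dim V^H = 0$ carry no H-space structure at all, so your final ``homology isomorphism between simple spaces'' step has no analogue there --- in the paper, the group-like hypothesis and connectivity enter only through conservativity of $\Omega^V$ and the categorical structure of group completion.
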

This was proven by May \cite{may72} in the case that $\pi_0(A)$ is trivial (and in a different framework by Boardman and Vogt \cite{bv73}) and Segal \cite{seg73} provided the missing input, the approximation theorem, to deduce the general case, as explained in {\cite[pp. 487, footnote~21]{clm76}}.
The hard part is the \enquote{only if}-direction, showing that it suffices that $\pi_0(A)$ is a group in order to construct a space $B^n A$ such that $A \cong \Omega^n B^n A$.
The more detailed version of the above theorem does that explicitly by constructing the delooping $B^n A$ of an arbitrary $\E{n}$-algebra $A$ and then shows that there is a natural map $A \to \Omega^n B^n A$ which is an equivalence if and only if $\pi_0(A)$ if a group. In general this map is the so-called \emph{group completion}, the initial map into an $\E{n}$-algebra for which $\pi_0$ is a group.

The goal of this paper is to prove a genuine equivariant version of the above theorem, generalizing previous conditional results by May and Guillou. 
We will study \emph{genuine $G$-spaces}, objects represented by topological spaces with an action of a finite group $G$. The role of the $\E{n}$-operad is taken over by the $G$-operads $\E{V}$ where~$V$ can be any $n$-dimensional real $G$-representation. 
Apart from applications to equivariant loop space theory, these $G$-operads have been used to study equivariant factorization homology \cite{hor19} \cite{zou23} \cite{hkz24}, 
equivariant and real versions of topological Hochschild homology \cite{hor19} \cite{dmpr21} and equivariant versions of the Hopkins-Mahowald theorem \cite{hw20} \cite{lev22}.

Given a $G$-representation $V$ and a based $G$-space $X$, we can consider the $V$-fold loop space $\Omega^V X$, that is based maps from the one-point-compactification $S^V$ of $V$ into $X$. 
This $V$-fold loop space carries the structure of an $\E{V}$-algebra. 
A genuine equivariant recognition principle must find a list of necessary conditions on an $\E{V}$-algebra to be equivalent to a $V$-fold loop space. 
For a subgroup $H$ of $G$, the fixed points $A^H$ of an $\E{V}$-algebra $A$ carry the structure of an $\E{\dim V^H}$-algebra in spaces. In particular, we only obtain a monoid structure on $\pi_0(A^H)$ if $\dim V^H \ge 1$. 
in the case that $A=\Omega^V X$ is a $V$-fold loop space, this monoid is a group.
The goal of this paper is to show that this necessary condition on $A$ to be equivalent to a $V$-fold loop space is also sufficient. 

Before we can state the main theorem, we need the following two definitions:
\begin{definitionI}
    Let $A$ be an $\E{V}$-algebra in $G$-spaces. We say that $A$ is \emph{group-like} if $\pi_0(A^H)$ is a group for all $H$ such that $\dim V^H \ge 1$.
    We denote the full subcategory of group-like $\E{V}$-algebras by $\Alggrp{\E{V}}{\GSpaces} \subset \Alg{\E{V}}{\GSpaces}$.

    Given an $\E{V}$-algebra $A$, we say that a map $A \to A^{\grp}$ to another $\E{V}$-algebra $A^{\grp}$ exhibits $A^{\grp}$ as the group completion of $A$ if it is an initial map to a group-like $\E{V}$-algebra. 
\end{definitionI}
\begin{definitionI}
    For $V$ a real $G$-representation, a based $G$-space $X$ is called $V$-connective if its $H$-fixed points $X^H$ are $(\dim V^H -1)$-connected for every subgroup $H$ of $G$.
\end{definitionI}
We will now state the genuine equivariant recognition principle in the form which also gives an explicit description of the group completion functor for non-group-like $\E{V}$-algebras.
\begin{thmx}[Recognition principle] \label{thm:recognition-principle}
    There is an adjunction 
    \[ \Omega^V \colon \Spaces^G_\ast \leftrightarrows \Alg{\E{V}}{\GSpaces} \noloc B^V  \] between the category of based $G$-spaces and the category of $\E{V}$-algebras in $G$-spaces such that
    \begin{itemize}
        \item For $A$ an $\E{V}$-algebra in $G$-spaces, the unit of the adjunction 
        \[ A \longrightarrow \Omega^V B^V A \]
        is an equivalence if and only if $A$ is group-like. In general, it exhibits the target as the group completion of the source. 
        \item For $X$ a based $G$-spaces, the counit of the adjunction 
        \[ B^V \Omega^V X \longrightarrow X \] 
        is an equivalence if and only if $X$ is $V$-connective. In general, it exhibits the source as the $V$-connective cover of the target. 
    \end{itemize}
    In particular, the above adjunction restricts to an equivalence of categories \[ \left( \Spaces^G_\ast \right)_{\ge V} \cong \Alggrp{\E{V}}{\GSpaces} \] between $V$-connective based $G$-spaces and group-like $\E{V}$-algebras. 
\end{thmx}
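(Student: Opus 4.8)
The plan is to deduce the recognition principle from the equivariant approximation theorem, following the two-sided bar-construction template of May and Guillou--May, and to extract the unit/counit analysis from it. First I would record the free--forgetful adjunction $\Free{\E{V}} \colon \GSpaces \leftrightarrows \Alg{\E{V}}{\GSpaces} \noloc U$ with associated monad $T = U \Free{\E{V}}$ on based $G$-spaces. The approximation theorem supplies a natural map $\Free{\E{V}} X \to \Omega^V \Sigma^V X$ of $\E{V}$-algebras; passing to underlying $G$-spaces and taking its adjoint under $\Sigma^V \dashv \Omega^V$ yields $\Sigma^V T X \to \Sigma^V X$, which endows $\Sigma^V$ with the structure of a right $T$-module. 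I then define the delooping by
\[ B^V A \;=\; \BarC\left( \Sigma^V,\, T,\, U A \right) \;=\; \left| [n] \mapsto \Sigma^V T^n U A \right|, \]
a based $G$-space functorial in the $\E{V}$-algebra $A$. For the adjunction itself I would argue abstractly: $\Omega^V \colon \Spaces^G_\ast \to \Alg{\E{V}}{\GSpaces}$ preserves limits (it does so on underlying $G$-spaces, and $U$ detects them) and is accessible (as $S^V$ is compact), so the adjoint functor theorem in the presentable setting furnishes a left adjoint, for which $B^V$ is the explicit model, with unit and counit induced from those of $\Sigma^V \dashv \Omega^V$ together with the bar augmentations.

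Next comes the unit $A \to \Omega^V B^V A$. The crucial step is that $\Omega^V$ commutes with the geometric realization defining $B^V$, giving $\Omega^V B^V A \simeq \BarC(\Omega^V \Sigma^V, T, U A)$. By the approximation theorem the left slot $\Omega^V \Sigma^V$ is the free group-like $\E{V}$-algebra functor, so comparing with the monadic bar resolution $\BarC(\Free{\E{V}}, T, U A) \simeq A$, the unit is induced levelwise by the approximation map $\Free{\E{V}} \to \Omega^V \Sigma^V$, that is, by group-completing each free algebra. Realizing identifies the unit with the group completion map $A \to A^{\grp}$, i.e.\ the reflection into $\Alggrp{\E{V}}{\GSpaces} \hookrightarrow \Alg{\E{V}}{\GSpaces}$; in particular it is an equivalence precisely when $A$ is already group-like.

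For the counit $B^V \Omega^V X \to X$, induced by the suspension counit $\Sigma^V \Omega^V \to \id$ together with the monad action, I would show it is the $V$-connective cover. On $H$-fixed points the bar construction is assembled from $(\Sigma^V(-))^H$, whose connectivity is governed by $\dim V^H$, so $(B^V \Omega^V X)^H$ is $(\dim V^H - 1)$-connected, meaning $B^V \Omega^V X$ is $V$-connective; and the equivariant Freudenthal suspension theorem, applied fixed-point-wise and improved by the bar filtration, shows the map is an isomorphism on $\pi^H_k$ for all $k \ge \dim V^H$. Hence it is an equivalence exactly when $X$ is $V$-connective. Finally, since $\Omega^V X$ is always group-like (for $\dim V^H \ge 1$ the fixed summand $\R \subseteq V^H$ supplies a loop coordinate making $\pi_0((\Omega^V X)^H)$ a group) and $B^V A$ is always $V$-connective, the adjunction restricts to an equivalence on the subcategories where the unit and counit are invertible, yielding $\left( \Spaces^G_\ast \right)_{\ge V} \cong \Alggrp{\E{V}}{\GSpaces}$.

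The main obstacle I anticipate lies in the two technical inputs to the unit and counit analyses: commuting $\Omega^V$ past the bar realization (equivalently, the equivariant group-completion theorem identifying $\Omega^V B^V A$ with $A^{\grp}$) and the fixed-point-wise Freudenthal estimates for the counit. Both are exactly the places where the genuine subtleties of varying $\dim V^H$ and the absence of a trivial summand resurface, since they must be controlled compatibly across all subgroups $H$; however, they are tamed once the approximation theorem is available for every representation $V$ and every based $G$-space $X$, which is the deepest input and is established separately.
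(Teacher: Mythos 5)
Your route is genuinely different in its mechanics from the paper's: you build the left adjoint as the classical two-sided bar construction $\BarC(\Sigma^V,T,\fgtOne{\E{V}}A)$ and analyze the unit and counit directly, whereas the paper applies the monadicity theorem to the triangle $(\Spaces^G_\ast)_{\ge V} \to \Alggrp{\E{V}}{\GSpaces} \to \Spaces^G$ to obtain the equivalence $(\Spaces^G_\ast)_{\ge V} \cong \Alggrp{\E{V}}{\GSpaces}$ in one stroke, and then reads off both bullet points formally by factoring $\Omega^V$ through the $V$-connective cover and the group-like inclusion (your bar construction only appears there afterwards, as \Cref{rem:description-BV}). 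Your unit analysis can be made to work, but be aware of two inputs it uses silently: first, commuting $\Omega^V$ past the bar realization is \emph{not} a consequence of the approximation theorem; it is a separate geometric fact (the paper's \Cref{prop:monadicity-OmegaV} and \Cref{lem:monadicity-E-Omega}, citing Costenoble--Waner), applicable here because the bar levels $\Sigma^V T^n \fgtOne{\E{V}}A$ are $V$-connective. Second, to conclude that realizing the levelwise group completions $\Free{\E{V}}T^n\fgtOne{\E{V}}A \to \Omega^V\Sigma^V T^n\fgtOne{\E{V}}A$ again yields a group completion, you need that group-like $\E{V}$-algebras are closed under geometric realization (equivalently, that the inclusion $\Alggrp{\E{V}}{\GSpaces} \subset \Alg{\E{V}}{\GSpaces}$ preserves these colimits); the paper secures this, in stronger form, via the right adjoint of \Cref{prop:grpcore}. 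Both points are fixable omissions.

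The genuine gap is the counit. You propose to show that $B^V\Omega^V X \to X$ is an equivalence for $V$-connective $X$ by ``equivariant Freudenthal applied fixed-point-wise and improved by the bar filtration.'' This step would fail as stated: Freudenthal only produces equivalences in a range of degrees, and no bar-filtration argument is given (or available) that removes the range; moreover $(\Omega^V X)^H = \map_\ast(S^V,X)^H$ mixes the fixed points $X^K$ for all subgroups $K \le H$, so subgroup-by-subgroup connectivity estimates cannot be assembled into a statement about the genuine $G$-map. What actually closes this step is formal except for one nontrivial input: by the triangle identity the composite $\Omega^V X \to \Omega^V B^V \Omega^V X \to \Omega^V X$ is the identity, and your unit analysis shows the first map is an equivalence (since $\Omega^V X$ is group-like), hence $\Omega^V(\text{counit})$ is an equivalence; one then needs that $\Omega^V$ is \emph{conservative} on $V$-connective $G$-spaces to conclude that the counit itself is an equivalence. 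That conservativity is exactly the paper's \Cref{prop:monadicity-OmegaV}, proved by induction over subgroups using the cofiber sequence $S(V)_+ \to S^0 \to S^V$ and an equivariant cell argument; it is a real theorem precisely because $V$ need not contain a trivial summand, and nothing in your proposal supplies it. Once it is added, the identification of the counit with the $V$-connective cover for general $X$ also follows, since $\Omega^V$ applied to the cover map $\tau_{\ge V}X \to X$ is an equivalence.
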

It is an easy consequence of this theorem that the free group-like $\E{V}$-algebra on a based $G$-space $X$ must be given by the $V$-fold loop space $\Omega^V \Sigma^V X$.
Following May's strategy of using the monadicity theorem, it is actually possible to deduce the recognition principle from this special case. We therefore first prove an equivariant version of the so-called approximation theorem:
\begin{thmx}[Approximation theorem] \label{thm:approximation-theorem}
    For $X$ a based $G$-space, the natural map
    \[ \Free{\E{V}} X \longrightarrow \Omega^V \Sigma^V X \] from the free $\E{V}$-algebra on $X$ to the $V$-fold loop space of the $V$-fold suspension of $X$ exhibits the target as the group completion of the source. 
\end{thmx}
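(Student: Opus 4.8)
The plan is to prove the approximation theorem by reducing to the non-equivariant case fixed-point-by-fixed-point, using the genuine equivariant structure encoded in the $G$-operad $\E{V}$. The key observation is that both sides of the map $\Free{\E{V}} X \to \Omega^V \Sigma^V X$ are built from geometric data — configuration spaces of $V$-framed $G$-disks on the left, and the scanning/evaluation map on the right — so I would first identify, for each subgroup $H$, what structure the $H$-fixed points carry. Since taking $H$-fixed points sends an $\E{V}$-algebra to an $\E{\dim V^H}$-algebra in spaces (as recalled in the excerpt), the map on $H$-fixed points should be comparable to the non-equivariant approximation map $\Free{\E{\dim V^H}}(X^H) \to \Omega^{\dim V^H}\Sigma^{\dim V^H}(X^H)$, at least after identifying the relevant configuration spaces. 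The crux is that group completion and $H$-fixed points interact correctly: because group-likeness is defined \emph{levelwise} over subgroups (via $\pi_0(A^H)$ for $\dim V^H \ge 1$), it suffices to check the group-completion condition on each fixed-point space separately.

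First I would set up the natural map precisely, presumably via the equivariant scanning or configuration-space model, and verify it is a map of $\E{V}$-algebras. Second, I would reduce the statement \enquote{this map exhibits the target as the group completion of the source} to two checks: (i) the target $\Omega^V \Sigma^V X$ is group-like, and (ii) the map is initial among maps to group-like algebras. For (i), one computes $(\Omega^V \Sigma^V X)^H \simeq \Omega^{V^H}(\Sigma^V X)^H$ and observes that a loop space on a space is always group-like, so $\pi_0$ is a group whenever $\dim V^H \ge 1$; this should be essentially formal. The real content is (ii), the universal property of group completion.

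For step (ii) the strategy is to reduce to the non-equivariant approximation theorem of Segal together with the group-completion theorem, applied on each fixed-point level. Concretely, I would show that on $H$-fixed points the map recovers (up to equivalence) the classical map $C_{\dim V^H}(X^H) \to \Omega^{\dim V^H}\Sigma^{\dim V^H}(X^H)$ whose target is known to be the group completion of the free $\E{\dim V^H}$-algebra monad. The subtlety is that the left-hand side, $(\Free{\E{V}} X)^H$, is \emph{not} simply the free $\E{\dim V^H}$-algebra on $X^H$: the equivariant configuration spaces of $V$-framed $G$-disks have strictly more components, indexed by orbit types and by the various subgroups, so one must carefully analyze the fixed points of the free-algebra functor. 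I expect this comparison — showing that the fixed points of the genuine free $\E{V}$-algebra assemble, after group completion, into exactly the data that the fixed points of $\Omega^V\Sigma^V X$ see — to be the main obstacle. It requires a genuine-equivariant enhancement of Segal's argument, handling the non-trivial isotropy strata of the $V$-framed disk configuration spaces rather than just the free stratum that appears non-equivariantly.

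Once the fixed-point comparison is in place, I would conclude by invoking a general principle that a map of $\E{V}$-algebras which is a group completion on every fixed-point space (i.e.\ induces an equivalence after applying the relevant $\pi_0$-localization levelwise and is compatible with the $G$-operadic structure) is a group completion in $\Alg{\E{V}}{\GSpaces}$. This reduction from the genuine statement to levelwise statements is exactly where the \emph{genuine} (as opposed to naive) nature of $\E{V}$ is needed, and it is what allows the theorem to hold for all $V$ — in particular for $V$ with no trivial summand, where $\dim V^G$ may be zero and the earlier results of Hauschild do not apply.
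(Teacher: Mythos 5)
There is a genuine gap, and it sits exactly at the point your proposal defers. Your entire reduction rests on the \enquote{general principle} in your last paragraph: that a map of $\E{V}$-algebras which is a group completion on every fixed-point space is a group completion in $\Alg{\E{V}}{\GSpaces}$. This fails --- or rather, is not even well-posed --- precisely for the subgroups $H$ with $\dim V^H = 0$, which is the only case in which the theorem is new. For such $H$ the fixed points $A^H$ of an $\E{V}$-algebra carry no monoid structure at all (only an $\E{0}$-structure, i.e.\ a base point), so \enquote{group completion on $H$-fixed points} has no meaning, and group-likeness imposes no condition there. Nevertheless the $\E{V}$-group completion \emph{does} change these fixed points, and not by any levelwise $\pi_0$-localization: the paper shows that it freely adds an action of the fixed points of the equivariant factorization homology $\left(\int_{V\setminus\{0\}}A\right)^H$. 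Concretely, when $V^G=\{0\}$ the map you would need to analyze on $G$-fixed points is $\left(\Free{\E{V}}X\right)^G \to \left(\Omega^V\Sigma^V X\right)^G \simeq X^G \times \Omega\map_\ast(S(V)_+,\Sigma^V X)^G$ (Hauschild's splitting); this is neither an equivalence nor a group completion of monoids, so no fixed-point-by-fixed-point criterion of the kind you invoke can detect that the composite map is the $\E{V}$-group completion. Reflective localizations at a levelwise-defined subcategory are simply not computed levelwise in general, and here they provably are not.

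The second issue is that the step you flag as \enquote{the main obstacle} --- the comparison of $\left(\Free{\E{V}}X\right)^H$ with classical configuration-space models --- is deferred without any concrete mechanism, yet it is where all the content of the theorem lives. The paper handles it by interpolating with operads $\Eiso{V}\subset\Eisoext{V}\subset\E{V}$: for $V$-isotropy $H$ one does \emph{not} compare with Segal's non-equivariant theorem applied to $X^H$ (note $\left(\Free{\E{V}}X\right)^H$ is an equivariant configuration space of $H$-orbits in $\res{G}{H}V$, not $\Free{\E{\dim V^H}}(X^H)$), but with Hauschild's equivariant approximation theorem for the $H$-representation $\res{G}{H}V$, which contains a trivial summand exactly because $H$ is $V$-isotropy; then one shows the $\Eisoext{V}$-group completion is levelwise in the appropriate sense, and finally computes the free $\E{V}$-algebra on the resulting $\Eisoext{V}$-algebra via factorization homology and equivariant nonabelian Poincar\'e duality. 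Without some replacement for this last step --- an actual computation of what group completion does at the non-$V$-isotropy subgroups --- your outline cannot be completed.
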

The non-equivariant version of the approximation theorem is the aforementioned result due to Segal \cite{seg73}. 
The equivariant version is known in the following two cases: 
Hauschild \cite{hau80} proved it in the case that $V$ contains a trivial summand, in which the group completion can be computed by applying $\Omega B(-)$ on all fixed points. 
In the cited paper, he only provides details for the case $X=S^0$ but remarks that the general case works similarly.
Rourke and Sanderson \cite{rs00} gave another proof of the same special case and also proved the result for arbitrary $V$, provided that $X$ is $G$-connected. In the latter case, the left hand side already is group-like and the map appearing in \Cref{thm:approximation-theorem} is an equivalence. 
This was used by Guillou and May \cite{gm17} to deduce \Cref{thm:recognition-principle} in the same cases, i.e. if $V$ contains a trivial summand or that the unit $A \to \Omega^V B^V A$ is an equivalence if the $\E{V}$-algebra $A$ is connected.
A version of the recognition principle for $G=C_2$ and $V$ being the sign representation appeared in Stiennon's thesis \cite{sti13} and later in work by Moi \cite{moi20}. Both of them prove that a group-like simplicial monoid with anti-involution models an equivariant loop space. 

\subsection*{Proof strategy}
We will now elaborate on the proof strategy of the equivariant approximation theorem (\Cref{thm:approximation-theorem}). 
The recognition principle (\Cref{thm:recognition-principle}) is a rather formal consequence of this result using the monadicity theorem.

As mentioned earlier, the main difficulty is that for the subgroups $H$ of $G$ for which $V^H=\{0\}$, the fixed points $A^H$ of an $\E{V}$-algebra $A$ do not generally admit the structure of an $\E{1}$-algebra. 
In contrast to the case where $V$ contains a trivial summand this means that the $\E{V}$-group completion functor is not simply given by group completion on all fixed points. 
However, we use that those $H$-fixed points are acted on by the fixed points of the equivariant factorization homology $\left(\int_{V \setminus \{0\}} A \right)^H$. 

In order to study the behavior of the group completion on those two types of fixed points separately, we define a $G$-operad $\Eisoext{V}$ which lies in between $\E{0}$ and $\E{V}$. For subgroups $H$ such that $\dim V^H \ge 1$, the collection of $H$-fixed points of an $\Eisoext{V}$-algebra have the same structure as for an $\E{V}$-algebra. However, for the $H$-fixed points for $H$ such that $\dim V^H = 0$, they really only admit the structure of an $\E{0}$-algebra in spaces without an action of the equivariant factorization homology. 

Using this, we can split up the computation of the free group-like $\E{V}$-algebra into three steps:
We first compute the free $\Eisoext{V}$-algebra on $X$, then its group completion as an $\Eisoext{V}$-algebra and finally the free $\E{V}$-algebra on that group completed $\Eisoext{V}$-algebra.

For the first step, we use the explicit formula for a free algebra for an operad to see that the $H$-fixed points with $\dim V^H \ge 1$ of the free $\Eisoext{V}$-algebra on a based $G$-space $X$ are given by a certain equivariant configuration space with labels in $X$. Moreover, the $H$-fixed points for $H$ with $\dim V^H = 0$ are just given by $X^H$.

In the second step, we use that the condition $\dim V^H \ge 1$ is equivalent to $H$-representation $\res{G}{H} V$ containing a trivial summand. We might therefore use the approximation theorem of Hauschild \cite{hau80} to deduce that the $H$-fixed points of the $\Eisoext{V}$-group completion of the free $\Eisoext{V}$-algebra on $X$ are equivalent to $\left( \Omega^V \Sigma^V X \right)^H$. 
Moreover, the group completion does not change the other $H$-fixed points at all.

In the third and final step, we have to compute the free $\E{V}$-algebra on that group completed $\Eisoext{V}$-algebra. 
This time, this will not change any $H$-fixed points for which $\dim V^H \ge 1$.
Using that this holds for all $H$ appearing as isotropy groups of the $G$-manifold $V \setminus \{0\}$ and our previous computation, we will see that the $H$-fixed points for $\dim V^H = 0$ are equipped with a free action of the equivariant factorization homology $\left(\int_{V \setminus \{0\}} \Omega^V \Sigma^V X\right)^H$.
Finally, we will use equivariant nonabelian Poincaré duality, due to Horev, Klang and Zou, \cite{hkz24} to compute this factorization homology. 

\subsection*{Acknowledgments}
I would like to thank my PhD supervisor Jesper Grodal for his help and guidance while writing this paper. 
I am grateful to Gabriel Angelini-Knoll, Robert Burklund, Jan Steinebrunner and Nat(h)alies Stewart and Wahl for helpful discussions. I would like to especially thank Natalie Stewart for comments on a previous draft of this article. The author was supported by the DNRF
through the Copenhagen Center for Geometry and Topology (DNRF151). 

\section{Preliminaries}
    Throughout this paper, we fix a finite group $G$ and an $n$-dimensional real $G$-representa-tion~$V$.
\subsection{\texorpdfstring{$G$}{G}-symmetric monoidal \texorpdfstring{$G$}{G}-categories and \texorpdfstring{$G$}{G}-operads}
    We use the theory of $\infty$-categories as developed by Lurie in \cite{lur09} and \cite{lur17}. We use the term \emph{category} to refer to an $\infty$-category. 
    The category of spaces (or animae, homotopy types, ...) is denoted by $\Spaces$.
    Moreover, we use the theory of parameterized homotopy theory developed by Barwick, Glasman, Dotto, Nardin and Shah \cite{bdgns24} and in particular parameterized operads as developed by Nardin and Shah in \cite{sha23} and \cite{ns22}. 
    In particular, we assume the reader to be familiar with $G$-symmetric monoidal $G$-categories and $G$-operads. 

    We will mostly omit the $G$ from the notation and underline categories to indicate that they are parameterized, e.g. the $G$-category of (genuine) $G$-spaces is denoted by $\GSpaces$, its based version by $\BGSpaces$. Their fixed point categories $\GSpaces^G=\Spaces^G$ and $\BGSpaces^G=\Spaces^G_\ast$ then recover the categories of (genuine) $G$-spaces and based (genuine) $G$-spaces, respectively.

    For $\CC$ a $G$-symmetric monoidal $G$-category and $K \subset H$ nested subgroups of $G$, we use the notation $\Nm{H}{K} \colon \CC^K \longrightarrow \CC^H$ for its norm functor.

    We denote the category of $G$-operads by $\Op{G}$ and the category of $G$-symmetric monoidal $G$-categories and $G$-symmetric monoidal functors by $\SymMonCat{G}$. 

    The inclusion $\SymMonCat{G} \to \Op{G}$ admits a left adjoint, the $G$-envelope functor
    \[ \Env \colon \Op{G} \longrightarrow \SymMonCat{G} \,. \]
    Using that the $G$-envelope of the terminal $G$-operad is given by the $G$-symmetric monoidal $G$-category of finite $G$-sets $\FinN$ with its cocartesian $G$-symmetric monoidal structure, this functor factors through the slice category $\overslice{\SymMonCat{G}}{\FinN}$. 
    We will make use of the following theorem:
    \begin{theorem}[Barkan, Haugseng, Steinebrunner {\cite[Cor. 5.2.15, Lem. 5.2.16]{bhs24}}] \label{thm:operad=symmon}
        The $G$-envelope functor induces a fully faithful functor \[ \Op{G} \longrightarrow \overslice{\operatorname{Cat}^{G-\otimes}}{\FinN} \] from the category of $G$-operads to the slice category of $G$-symmetric monoidal $G$-categories and $G$-symmetric monoidal functors over the $G$-symmetric monoidal $G$-category of finite $G$-sets $\FinN$. 
        The essential image is spanned by the $G$-symmetric monoidal functors $F \colon \CC \to \FinN$ such that the squares
        \[ \begin{tikzcd}[sep=huge]
            \CC^H \times \CC^H \ar[r,"{\otimes}"] \ar[d,"{F^H \times F^H}"] & \CC^H \ar[d,"{F^H}"] \\
            \Fin{H} \times \Fin{H} \ar[r,"{\sqcup}"] & \Fin{H}
        \end{tikzcd} \quad \text{and} \quad \begin{tikzcd}[sep=huge]
            \CC^K \ar[r,"{\Nm{H}{K}}"] \ar[d,"F^K"] & \CC^H \ar[d,"{F^H}"] \\
            \Fin{K} \ar[r,"{H \times_K (-)}"] & \Fin{H}
        \end{tikzcd} \]
        are pullbacks for all nested subgroups $K \subset H$ of $G$.
    \end{theorem}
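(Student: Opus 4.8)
The plan is to exhibit the comparison functor to the slice, prove fully faithfulness by a formal adjunction argument resting on a single explicit lemma, and then separately identify the essential image. Write $\iota \colon \SymMonCat{G} \hookrightarrow \Op{G}$ for the inclusion viewing a $G$-symmetric monoidal $G$-category as a $G$-operad, so that $\Env \dashv \iota$. The terminal $G$-operad $\mathrm{Comm}$ has envelope $\FinN$ by the discussion preceding the statement, so applying $\Env$ to the unique maps $\OO \to \mathrm{Comm}$ produces natural structure maps $\Env\OO \to \FinN$, hence the functor $\Op{G} \to \overslice{\SymMonCat{G}}{\FinN}$. To compute mapping spaces I would present $\map_{\overslice{\SymMonCat{G}}{\FinN}}(\Env\OO, \Env\PP)$ as the fiber over the structure map of $\map_{\SymMonCat{G}}(\Env\OO,\Env\PP) \to \map_{\SymMonCat{G}}(\Env\OO,\FinN)$, and then use $\Env \dashv \iota$ to rewrite these as $\map_{\Op{G}}(\OO, \iota\Env\PP)$ and $\map_{\Op{G}}(\OO, \iota\FinN)$. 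Full faithfulness then follows as soon as the diagram
\[ \PP \longrightarrow \iota\Env\PP \longleftarrow \iota\FinN \longleftarrow \mathrm{Comm} \]
assembles into a pullback $\PP \simeq \mathrm{Comm} \times_{\iota\FinN} \iota\Env\PP$ of $G$-operads, since $\map_{\Op{G}}(\OO,\PP)$ is then computed as exactly the same fiber.

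The geometric input is therefore this pullback lemma: the underlying $G$-operad of the envelope, together with its fibration over $\FinN$, recovers $\PP$ as the fiber over the point. I would deduce it from the explicit envelope, whose $H$-objects are formal $H$-indexed tensors (norms and disjoint sums) of colors of $\PP$ and whose morphisms are assembled from the operations, with the map to $\FinN$ recording the underlying finite $H$-set. Pulling back along the unit $\mathrm{Comm} \to \iota\FinN$, which selects the one-point orbit $H/H$, cuts $\iota\Env\PP$ down to the objects lying over a single point, i.e. the colors of $\PP$, while the morphisms lying over the active maps in $\FinN$ recover the spaces of multi-operations; the norm square is precisely what lets one reconstruct the genuinely equivariant $H$-level operations out of $K$-colors for $K \subset H$. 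Unwinding the construction, this identification is essentially formal.

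For the essential image I would argue in two directions. Forwards, the same description shows an envelope satisfies both pullback conditions: the disjoint-union square encodes $\CC^H_{S \sqcup T} \simeq \CC^H_S \times \CC^H_T$ and the norm square encodes $\CC^K_S \simeq \CC^H_{H \times_K S}$, each expressing that objects decompose canonically into indexed tensors of colors (the \enquote{freeness} of the envelope). Backwards, given $F \colon \CC \to \FinN$ satisfying the conditions I would define $\OO_\CC := \mathrm{Comm} \times_{\iota\FinN} \CC$, the full sub-$G$-operad on objects over the point, and build a comparison $\Env\OO_\CC \to \CC$ over $\FinN$; the pullback conditions force every object of $\CC^H$ over a finite $H$-set $S$ to split as the corresponding $S$-indexed norm-tensor of point-objects, making the comparison an equivalence. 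Here one first reduces the Segal decomposition over an arbitrary $S$ to the two displayed squares by writing $S$ as a disjoint union of orbits $H/K$ and inducting on the number of orbits and on the subgroup lattice. I expect this backward direction to be the main obstacle: one must verify that the additive and multiplicative pullback conditions interact coherently across all subgroups and restrictions, so that the reconstruction is functorial and genuinely inverse to $\Env$. All of this is the specialization to the algebraic pattern of finite $G$-sets of the general envelope theorem of Barkan--Haugseng--Steinebrunner, whose pattern-theoretic hypotheses on $\FinN$ supply exactly the coherence this step requires.
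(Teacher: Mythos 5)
The paper does not prove this theorem: it is imported as a black box from Barkan--Haugseng--Steinebrunner \cite[Cor. 5.2.15, Lem. 5.2.16]{bhs24}, so there is no in-paper argument to compare yours against; the comparison can only be with the cited source. Your outline does reproduce the strategy of that source (and of Haugseng--Kock in the non-equivariant case). The formal skeleton is right: mapping spaces in $\overslice{\SymMonCat{G}}{\FinN}$ are fibers of mapping spaces in $\SymMonCat{G}$, the adjunction $\Env \dashv \iota$ transports these to fibers of mapping spaces of $G$-operads, and since $\mathrm{Comm}$ is terminal, $\map_{\Op{G}}(\OO,-)$ turns the pullback $\mathrm{Comm} \times_{\iota\FinN} \iota\Env(\PP)$ into exactly that fiber, so full faithfulness is equivalent to the unit exhibiting $\PP \simeq \mathrm{Comm} \times_{\iota\FinN} \iota\Env(\PP)$. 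Your reduction of the essential image to a Segal-type reconstruction (define $\OO_{\CC} = \mathrm{Comm} \times_{\iota\FinN} \iota\CC$, compare $\Env(\OO_{\CC})$ with $\CC$, decompose objects orbitwise using the two pullback squares) is likewise the correct shape.

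Two caveats. First, the argument is circular as a standalone proof: the pullback lemma and the essential-image reconstruction that you isolate as ``the geometric input'' and ``the main obstacle'' are not lemmas you prove --- they are precisely the content of the cited Cor. 5.2.15 and Lem. 5.2.16, and in the end you discharge them by invoking the general envelope theorem of BHS, i.e. by invoking the statement being proved. As an explanation of how the pattern-theoretic theorem specializes to $\FinN$, this is fine; as a blind proof it has not actually produced the hard content. Second, your description of the envelope (``objects are formal $H$-indexed tensors of colors, morphisms assembled from operations'') is a heuristic: in the Nardin--Shah framework used here, $G$-operads are fibrations over a span category of finite $G$-sets, and making the unwinding precise --- in particular identifying the norm-type multimorphism spaces in the pullback $\mathrm{Comm} \times_{\iota\FinN} \iota\Env(\PP)$ with those of $\PP$ --- is exactly where the work of the citation lives. (Minor slip: in your displayed cospan the middle arrow points the wrong way; the cospan is $\mathrm{Comm} \to \iota\FinN \leftarrow \iota\Env(\PP)$.)
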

    \begin{remark} \label{prop:criterion-sub-operad}
        The above theorem in particular implies the following: Given a $G$-operad $\OO$, for a (not necessarily full) $G$-symmetric monoidal $G$-subcategory $\CC$ of a $\Env(\OO)$, the functor $\CC \subset \Env(\OO) \to \Fin{G}$ is contained in the essential image of the $G$-envelope functor from \Cref{thm:operad=symmon} if the following holds true for all nested subgroups $K \subset H$:
        \begin{itemize}
            \item For all $f \colon c \to c^\prime$ and $g \colon d \to d^\prime$ in $\Env(\OO)^H$: If $f \otimes g \colon c \otimes d \to c^\prime \otimes d^\prime$ is contained in $\CC^H$, then so are $f$ and $g$ (and in particular their source and target).
            \item For all $f \colon c \to c^\prime$ in $\Env(\OO)^K$: If $\Nm{H}{K} (f) \colon \Nm{H}{K} (c) \to \Nm{H}{K} (c^\prime)$ is in $\CC^H$, then $f$ is in $\CC^K$ (and in particular its source and target)
        \end{itemize}
    \end{remark}
    Using the above theorem and in particular the remark, we will not work with $G$-operads directly but rather replace them with their $G$-envelope.

    Given a $G$-symmetric monoidal $G$-category $\CC$ and a $G$-operad $\OO$, we write \[ \Alg{\OO}{\CC} = \Fun_{\Op{G}}(\OO,\CC) \cong \Fun^{G-\otimes}(\Env(\OO),\CC) \] for the category of $\OO$-algebras in $\CC$. 
    This category is the fixed point category of a $G$-category of algebras.

The following result gives an explicit formula for computing free algebras: 
\begin{proposition}
\label{prop:formula-Kan-extension}
    Let $f \colon \OO \to \PP$ be a map of $G$-operads. Then precomposition 
    \[ f^\ast \colon \Alg{\PP}{\GSpaces} \longrightarrow \Alg{\OO}{\GSpaces} \] admits a left adjoint, which can be computed as a left Kan extension of the induced maps out of the $G$-envelope, i.e. the Beck Chevalley transform of the following diagram is invertible, making the diagram commute:
    \[ \begin{tikzcd}[column sep = huge]
        \Alg{\OO}{\GSpaces} \ar[r,"f_!"] \ar[d] & \Alg{\PP}{\GSpaces} \ar[d] \\
        \Fun(\Env(\OO),\GSpaces) \ar[r,"{\Env(f)_!}"] & \Fun(\Env(\PP),\GSpaces)
    \end{tikzcd}  \]
    where the upper arrow is the so-called \emph{operadic left Kan extension} along $f$, the bottom arrow is left Kan extension along the underlying functor of $\Env(f)$ and the vertical arrows are forgetful functors.
\end{proposition}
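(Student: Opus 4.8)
The plan is to reduce the statement to a purely $G$-symmetric monoidal assertion and then to recognize the operadic left Kan extension as an ordinary (parameterized) left Kan extension via Day convolution. Using the equivalence $\Alg{\OO}{\GSpaces} \cong \Fun^{G-\otimes}(\Env(\OO),\GSpaces)$ coming from \Cref{thm:operad=symmon}, the precomposition functor $f^\ast$ is identified with restriction along the $G$-symmetric monoidal functor $F \coloneqq \Env(f) \colon \Env(\OO) \to \Env(\PP)$. It therefore suffices to produce a left adjoint to restriction along an arbitrary $G$-symmetric monoidal functor $F$ between small $G$-symmetric monoidal $G$-categories, and to show that it is computed on underlying functors by the left Kan extension $\Env(f)_!$. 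Since the two vertical forgetful functors in the square are exactly the functors that remember only the underlying functor out of the envelope, such a formula is the same as the asserted invertibility of the Beck--Chevalley transform.

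First I would construct the adjoint at the level of \emph{lax} functors. By \cite[Sec. 6]{ns22}, the presheaf functor $\GFun(-,\GSpaces) \colon \ParCat \to \ParBigCat$ is itself $G$-symmetric monoidal and sends a functor of small $G$-categories to the induced parameterized left Kan extension on presheaf categories; applied to a small $G$-symmetric monoidal $G$-category this equips the presheaf category with the $G$-Day convolution structure, whose algebras over the terminal $G$-operad are precisely lax $G$-symmetric monoidal functors into $\GSpaces$ (see \cite[Rem. 6.0.13]{ns22}). Functoriality then yields a left adjoint $F_! \colon \Fun^{\lax}(\Env(\OO),\GSpaces) \leftrightarrows \Fun^{\lax}(\Env(\PP),\GSpaces) \noloc F^\ast$ which is automatically computed underlying, i.e. as an ordinary left Kan extension.

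The key step, which I expect to be the main obstacle, is to upgrade this from lax to strong: I must show that $F_!$ sends strong $G$-symmetric monoidal functors (the actual algebras) to strong ones. This is where the specific nature of $F = \Env(f)$ enters. Following the criterion of \cite[Lem. 4.2.5]{hor19}, it is enough to verify that $\Env(f)$ is \emph{tensor disjunctive}, i.e. that for all nested $K \subset H$, all $d_1, d_2 \in \Env(\PP)^H$ and all $d \in \Env(\PP)^K$ the comparison functors $\otimes \colon \overslice{\Env(\OO)}{d_1} \times \overslice{\Env(\OO)}{d_2} \to \overslice{\Env(\OO)}{d_1 \otimes d_2}$ and its norm analogue built from $\Nm{H}{K}$ are $H$-cofinal. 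I expect this cofinality to be provable directly from \Cref{thm:operad=symmon}, in fact by strengthening it to an equivalence: both the tensor square relating $\otimes$ to $\sqcup$ and the norm square relating $\Nm{H}{K}$ to $H \times_K (-)$ are pullbacks over $\FinN$, so passing to fibers over $(d_1,d_2)$ versus over $d_1 \otimes d_2$ (respectively over the norm) identifies the relevant overcategories on the nose. A convenient way to organize this is to run the comparison simultaneously over all objects using the arrow category $\Ar(\Env(\OO))$ with its pointwise monoidal structure and target functor, and deduce the pullback for arrows from the pullback for objects.

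Finally, combining these inputs, the underlying left Kan extension $\Env(f)_!$ of a strong symmetric monoidal functor is again strong symmetric monoidal, so $F_!$ restricts to a functor on algebras that is left adjoint to $f^\ast$; this is the operadic left Kan extension, matching the existence statement of \cite[Thm. 4.3.4]{ns22}. Because $F_!$ was constructed so as to be computed on underlying functors, the Beck--Chevalley transform filling the square against the forgetful functors is by construction an equivalence, which is exactly the claim.
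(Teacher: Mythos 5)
Your proposal is correct in substance, but it is not the route the paper takes: the paper's entire proof is a one-line citation, observing that the statement is a special case of \cite[Lem.~3.45]{llp25} applied to the distributive $G$-category $\GSpaces$. What you propose instead is a self-contained reconstruction: identify algebras with strong $G$-symmetric monoidal functors out of the envelope, construct the left adjoint on \emph{lax} functors via the $G$-Day convolution machinery of \cite[Sec.~6]{ns22} (where it is automatically computed as an underlying left Kan extension), and then use the pullback squares over $\FinN$ from \Cref{thm:operad=symmon} to show that the comparison functors between slice categories are equivalences, so that the Kan extension of a strong functor is again strong. Notably, this is essentially verbatim the author's own earlier argument, which survives in the source file as a commented-out block (a definition of weakly $G$-tensor disjunctive functors, a lemma that envelope maps are such, and a Day-convolution proposition) before being replaced by the citation. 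Both routes are viable: the citation buys brevity and rests on a general theory of distributive $G$-categories, while your route exposes the combinatorial heart of the formula --- that a map $\Env(f)(c)\to d_1\otimes d_2$ in $\Env(\PP)$ decomposes essentially uniquely as a tensor (or norm) of maps after decomposing $c$, which is exactly what the pullback squares over $\FinN$ encode.

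Two points would need tightening to make your sketch rigorous. First, \cite[Lem.~4.2.5]{hor19} is stated under a fully-faithfulness hypothesis, and $\Env(f)$ is \emph{not} fully faithful for a general operad map $f$; as the author's commented-out draft notes, that hypothesis is only used there to invoke \cite[Lem.~4.2.4]{hor19}, whose conclusion is precisely the slice-category equivalence you verify directly, so the argument does go through --- but you must say this explicitly rather than cite the lemma as a black box. Second, the index category for the Kan-extension colimit at $d\in\Env(\PP)$ is the comma category $\Env(\OO)\times_{\Env(\PP)}\overslice{\Env(\PP)}{d}$, so the ``in families'' device should be run on $\Env(\OO)\times_{\Env(\PP)}\Ar(\Env(\PP))$ with its target projection --- e.g.\ by applying $\Ar(-)=\Fun(\Delta^1,-)$, which preserves pullbacks, to the squares of \Cref{thm:operad=symmon} --- and not on $\Ar(\Env(\OO))$ itself, whose fibers over $\Env(\PP)$ are not the relevant comma categories.
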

\begin{proof}
    This is a special case of \cite[Lem. 3.45]{llp25}, using that the $G$-category of $G$-spaces is distributive.
\end{proof}
Given a $G$-operad $\OO$, write $\colors(\OO)$ for its $G$-category of colors. Finally, we recall the following proposition. 
We write \[ \fgtOne{\OO} \colon \Alg{\OO}{\GSpaces} \longrightarrow \Fun(\colors(\OO),\GSpaces) \] for the forgetful functor.
\begin{proposition}[{\cite[Thm. 5.1.4(2), Cor. 5.1.5]{ns22}}] \label{prop:monadicity-forgetful} \label{that is not really the precise statement is it. apparenty is, check nathalie}
    For $\OO$ a $G$-operad, the forgetful functor 
    \[ \fgtOne{\OO} \colon \Alg{\OO}{\GSpaces} \longrightarrow \Fun(\colors(\OO),\GSpaces) \] is a conservative right adjoint preserving geometric realizations. In particular, it is monadic. 
\end{proposition}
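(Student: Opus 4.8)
The plan is to verify the three hypotheses of the Barr--Beck--Lurie monadicity theorem, namely that $\fgtOne{\OO}$ admits a left adjoint, is conservative, and preserves geometric realizations; the concluding \enquote{in particular} clause is then immediate from that theorem.

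For the left adjoint, I would identify $\Fun(\colors(\OO),\GSpaces)$ with the category of algebras over the trivial $G$-operad on the $G$-category of colors, so that $\fgtOne{\OO}$ becomes restriction along the canonical map of $G$-operads $\colors(\OO) \to \OO$. A left adjoint then exists by \Cref{prop:formula-Kan-extension}: it is the operadic left Kan extension along this map, i.e. the free $\OO$-algebra functor.

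For conservativity, I would use the identification of $\OO$-algebras with $G$-symmetric monoidal functors $\Env(\OO) \to \GSpaces$ supplied by \Cref{thm:operad=symmon}. Every object of $\Env(\OO)^H$ is assembled from colors via the monoidal product $\otimes$ and the norms $\Nm{H}{K}$ (decomposing a colored finite $H$-set into orbits). A map of algebras is a $G$-symmetric monoidal natural transformation, so up to the monoidal coherence data its component at a tensor product (resp. a norm) of colors is the product (resp. norm) of its components at those colors. Hence if it becomes an equivalence after applying $\fgtOne{\OO}$---that is, on all colors---then it is an equivalence on every object of $\Env(\OO)$, and therefore an equivalence of algebras.

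For the preservation of geometric realizations, the key input is that $\GSpaces$ is distributive: its $G$-symmetric monoidal product and all norms $\Nm{H}{K}$ preserve sifted colimits. Because $G$ is finite, these operations are built from finite products of spaces, and finite products commute with sifted colimits in $\Spaces$ fixed-point-wise. Granting this, the structure maps of an $\OO$-algebra commute with sifted colimits, so such colimits are computed on underlying objects and $\fgtOne{\OO}$ creates, in particular preserves, geometric realizations. I expect this distributivity---specifically the claim for the multiplicative norms, where finiteness of $G$ is what makes them finite products---to be the one point requiring genuine care; it is the same input that underlies \Cref{prop:formula-Kan-extension}. With all three conditions established, Barr--Beck--Lurie yields monadicity.
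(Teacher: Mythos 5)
Your proposal is correct, and it follows essentially the same route as the proof this paper relies on: the paper gives no argument of its own for this proposition but quotes it from Nardin--Shah \cite{ns22}, whose proof is exactly the Barr--Beck--Lurie verification you outline --- the free $\OO$-algebra functor via operadic left Kan extension along $\colors(\OO)^{\mathrm{triv}} \to \OO$ (here \Cref{prop:formula-Kan-extension}), conservativity from the fact that every object of $\Env(\OO)^H$ is a tensor product of norms of colors so that a map of algebras is an equivalence once it is one on colors, and preservation of sifted colimits from distributivity of the cartesian $G$-symmetric monoidal structure on $\GSpaces$, where finiteness of $G$ makes the norms (coinductions) into finite products on fixed points. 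The one caveat is that the step you pass over quickly --- that distributivity implies sifted colimits of algebras are created on underlying objects, the parameterized analogue of Lurie's \cite[Thm. 3.2.3.1]{lur17} --- is itself the substantial content of the cited theorem, so your sketch correctly identifies, rather than replaces, the work done in \cite{ns22}.
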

\subsection{The equivariant little disk operad  \texorpdfstring{$\E{V}$}{EV}, \texorpdfstring{$V$}{V}-fold loop spaces and equivariant nonabelian Poincaré duality}
    We will now turn towards more geometric constructions. We will recall the construction of the equivariant little disk operad $\E{V}$ and how to realize loop spaces as algebras over those $G$-operads. We will then recall the statement of nonabelian Poincaré duality. 

    The $G$-symmetric monoidal $G$-category of $n$-dimensional $G$-manifolds from \cite{hor19} is denoted by $\ManUnfr$. Its $V$-framed version is denoted by $\Man{V}$. The $G$-symmetric monoidal subcategory of finite disjoint unions of $V$-framed $G$-disks will be denoted by $\Disk{V}$. 
    The $G$-symmetric monoidal functor $\Disk{V} \to \FinN$ sending a disk to its $G$-set of equivariant path components is contained in the essential image of the sliced $G$-envelope functor from \Cref{thm:operad=symmon}, so that $\Disk{V}$ is the $G$-envelope of a $G$-operad $\E{V}$, see \cite[Prop. 3.7.4, Prop. 3.9.8]{hor19}. Given another $G$-symmetric monoidal $G$-category $\CC$, we might therefore write 
    \[ \Alg{\E{V}}{\CC} = \Fun^{G-\otimes}(\Disk{V},\CC) \] for the category of $\E{V}$-algebras in $\CC$.
Let us start with some general observations about these little disk operads, starting with a non-equivariant description of $\E{V}$-algebras for $V$ being the trivial representation.
For $n \ge 0$, let $\triv^n$ denote the $n$-dimensional trivial representation. Let $\mathrm{Disk}_n$ denote the non-equivariant category of $n$-dimensional disks. Then there is a functor $\mathrm{Disk}_n \to \left(\Disk{\triv^n}\right)^G$ into the category of $G$-disks framed in $\triv^n$, equipping a disk with trivial $G$-action. 
\begin{lemma}[{\cite[Lem. 7.2.1]{hor19}}]
    Let $\CC$ be a $G$-symmetric monoidal $G$-category. The functor  
    \begin{align*} 
        & \Alg{\E{\triv^n}}{\GSpaces} \\
        \cong{}& \Fun^{G-\otimes}\left(\Disk{\triv^n},\CC\right) \\
         \xrightarrow{(-)^G}{}& \Fun^{\otimes}\left(\left(\Disk{\triv^n}\right)^G,\CC^G\right)  \\
        \longrightarrow{}& \Fun^{\otimes}\left( \Disk{n} , \CC^G \right) \cong \Alg{\E{n}}{\CC^G} \end{align*} 
        is an equivalence.
\end{lemma}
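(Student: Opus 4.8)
The plan is to verify that the composite functor in the statement is an equivalence by analyzing each of the three maps in the chain. The first identification $\Alg{\E{\triv^n}}{\GSpaces} \cong \Fun^{G\text{-}\otimes}(\Disk{\triv^n},\CC)$ is simply the definition of $\E{V}$-algebras via the envelope, as recalled just above the lemma. So the real content is to show that the composite of the two remaining arrows is an equivalence, namely that taking $G$-fixed points of $G$-symmetric monoidal functors out of $\Disk{\triv^n}$, followed by restriction along the functor $\Disk{n} \to (\Disk{\triv^n})^G$, recovers all symmetric monoidal functors out of $\Disk{n}$.

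First I would recall what the fixed-point functor $(-)^G$ does to a $G$-symmetric monoidal $G$-category: it produces an ordinary symmetric monoidal category whose tensor product is the restriction of $\otimes$ to the $G$-fixed objects. The key structural input is that for the \emph{trivial} representation $\triv^n$, the $G$-category $\Disk{\triv^n}$ decouples pleasantly. The crucial claim I would isolate and prove is that restriction along $\Disk{n} \to (\Disk{\triv^n})^G$ induces an equivalence on symmetric monoidal functor categories into $\CC^G$; equivalently, that $\Disk{n}$, as a symmetric monoidal category, is a \emph{localizing generator} of $(\Disk{\triv^n})^G$ in the appropriate sense. Concretely, every object of $(\Disk{\triv^n})^G$ is a $G$-fixed finite disjoint union of $\triv^n$-framed $G$-disks, and because the framing is by a trivial representation, such an object is built by norms and disjoint unions from the single non-equivariantly-framed disk with trivial action, i.e.\ from the image of $\Disk{n}$. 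A $G$-symmetric monoidal functor is determined by its values on a generating set of objects together with compatibility with $\otimes$ and with the norms $\Nm{H}{K}$; since the trivial-representation framing forces the norm structure to be freely generated by the underlying disk, no extra data beyond the $\Disk{n}$-algebra structure survives.

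In carrying this out I would proceed in the following order. (1) Describe the objects and morphisms of $(\Disk{\triv^n})^G$ explicitly, identifying them with finite $G$-sets' worth of $n$-disks with compatible framings, and show the essential image of $\Disk{n}$ consists of the disks with trivial $G$-action. (2) Show that under $(-)^G$, a $G$-symmetric monoidal functor out of $\Disk{\triv^n}$ is the same as its restriction to this generating subcategory together with the norm data, using \Cref{thm:operad=symmon} to track how the pullback squares controlling $\otimes$ and $\Nm{H}{K}$ translate into conditions on the fixed-point functor. (3) Argue that for the trivial representation the norm squares impose no additional constraints beyond those already recorded by the underlying $\Disk{n}$-algebra, so that the restriction functor to $\Fun^{\otimes}(\Disk{n},\CC^G)$ is both essentially surjective and fully faithful.

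The main obstacle I anticipate is step (3): controlling the interaction between the norm functors $\Nm{H}{K}$ and the fixed-point functor $(-)^G$, and verifying that for the trivial representation they contribute no information not already present in the non-equivariant symmetric monoidal structure. Proving this cleanly likely requires a careful analysis of how disjoint unions of trivially-framed disks assemble under the norm, and possibly an appeal to the explicit description of $\Disk{V}$ from \cite{hor19}. The subtlety is precisely that a $G$-symmetric monoidal functor carries norm data that, \emph{a priori}, is not visible after taking $G$-fixed points; the trivial-representation hypothesis is what collapses this, and making that collapse rigorous is where the essential work lies.
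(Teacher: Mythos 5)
The paper itself contains no proof of this lemma --- it is imported verbatim from Horev \cite[Lem.~7.2.1]{hor19} --- so your proposal has to be judged against what a complete argument would need, and there it has a genuine gap, sitting exactly at the step you yourself defer to the end. Your concluding mechanism, that a $G$-symmetric monoidal functor is ``determined by its values on a generating set of objects together with compatibility with $\otimes$ and with the norms,'' is not a valid proof principle for $\infty$-categories: values on objects never determine a functor, and you have no presentation of $\Disk{\triv^n}$ by generators and relations to invoke. (It is true that every object of $\left(\Disk{\triv^n}\right)^G$ is a disjoint union of norms, since $\Nm{G}{H}(\R^n)\cong G/H\times\R^n$, but this bears only on essential surjectivity at the object level, not on functoriality or on mapping spaces.) The way to make your idea rigorous is to verify a universal property: exhibit $\Disk{\triv^n}$ --- equivalently, via \Cref{thm:operad=symmon}, the $G$-operad $\E{\triv^n}$ --- as the value at $\mathrm{Disk}_n^{\sqcup}$ (respectively at $\E{n}$) of a left adjoint, an ``inflation'' functor, to the fixed-point functor $(-)^G \colon \SymMonCat{G} \to \mathrm{Cat}^{\otimes}$; the asserted equivalence is then literally the adjunction equivalence, with no hand-made reconstruction of functors needed. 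Relatedly, your appeal to \Cref{thm:operad=symmon} in step (2) is misdirected: that theorem characterizes which $G$-symmetric monoidal categories over $\FinN$ are envelopes of $G$-operads; it is not a device for recovering a functor from its restriction to a subcategory.

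The second omission is that the actual geometric content of the trivial-representation hypothesis lives in morphism spaces, which your outline barely touches: what one must prove is that equivariant embedding spaces of trivially framed disks carry no genuine equivariant information, e.g.
\[ \mathrm{Emb}^G\left(G/H\times\R^n,\,G/K\times\R^n\right)\;\simeq\;\mathrm{Map}_G\left(G/H,G/K\right)\times\mathrm{Emb}\left(\R^n,\R^n\right) \]
compatibly with framings, composition, disjoint union and norms. It is this splitting --- not any statement about objects --- that makes restriction along $\mathrm{Disk}_n\to\left(\Disk{\triv^n}\right)^G$ an equivalence on functor categories after passing to fixed points, and it is exactly what fails for a nontrivial representation $V$, where the embedding spaces are genuinely equivariant configuration spaces. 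Without (i) the adjunction/universal-property mechanism and (ii) this embedding-space computation, your steps (2) and (3) cannot be carried out; with both supplied, your plan does become a proof.
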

We will implicitly use this proposition to identify based $G$-spaces with $\E{0}$-algebras in $G$-spaces.

More generally, there is a functor $\Disk{V^H} \to \Disk{V}^H$ equipping a $\dim V^H$-dimensional non-equivariant disk with the trivial $H$-action and then taking products with the orthogonal complement of $V^H$ in $V$. This induces a functor 
\[ (-)^H \colon \Alg{\E{V}}{\GSpaces} \longrightarrow \Alg{\E{V^H}}{\Spaces} \] where we write $\E{V^H}$ for the non-equivariant operad $\E{\dim V^H}$ to emphasize the functoriality. 

The little disk operads are functorial in representations, so that the inclusion $0 \to V$ induces a forgetful functor 
\[ \Alg{\E{V}}{\GSpaces} \longrightarrow \Alg{\E{0}}{\GSpaces} \cong \Spaces^G_\ast \,. \] This functor can also be described by evaluating the strong symmetric monoidal functor $\Disk{V} \to \GSpaces$ at $V \in \Disk{V}^G$, giving a $G$-spaces which admits a base point coming from the unique map from the empty disk into $V$.
We denote the left adjoint of this functor by 
\[ \Free{\E{V}} \colon \Spaces^G_\ast \longrightarrow \Alg{\E{V}}{\GSpaces} \,, \]
omitting the $\E{0}$ from the notation.

We will now discuss group-like algebras. 
Recall that for $n \ge 1$ and $A$ an $\E{n}$-algebra in spaces, $\pi_0(A)$ naturally admits the structure of a monoid in sets (commutative if $n \ge 2$) and that $A$ is called \emph{group-like} if that monoid is a group.
Let us recall the following definition from the introduction:
\begin{definition}
    Let $A$ be an $\E{V}$-algebra. We say that $A$ is \emph{group-like} if $\pi_0(A^H)$ is a group for all $H$ such that $\dim V^H \ge 1$.
    We denote the full subcategory of group-like $\E{V}$-algebras by $\Alggrp{\E{V}}{\GSpaces} \subset \Alg{\E{V}}{\GSpaces}$.
\end{definition}

Finally, we note that the existence of \emph{some} group completion functor for $\E{V}$-algebras can be proven formally:

\begin{proposition} \label{prop:grpcompl-exists}
    The inclusion \[  \Alggrp{\E{V}}{\GSpaces} \longrightarrow \Alg{\E{V}}{\GSpaces} \] admits a left adjoint. 
\end{proposition}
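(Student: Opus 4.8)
The plan is to realize $\Alggrp{\E{V}}{\GSpaces}$ as the full subcategory of objects local with respect to a small set of maps inside the presentable category $\Alg{\E{V}}{\GSpaces}$, and then to invoke the localization theory for presentable categories. First I would record two preliminary facts. On the one hand, $\Alg{\E{V}}{\GSpaces}$ is presentable. On the other hand, for every subgroup $H$ the fixed-point functor $(-)^H \colon \Alg{\E{V}}{\GSpaces} \to \Alg{\E{\dim V^H}}{\Spaces}$ admits a left adjoint $F_H$. The latter follows from the adjoint functor theorem once one knows that $(-)^H$ preserves limits and filtered colimits: by \Cref{prop:monadicity-forgetful} both are detected on underlying color-objects, where $(-)^H$ becomes the genuine $H$-fixed-point functor on $G$-spaces, which preserves limits and filtered colimits. (Alternatively, $(-)^H$ is visibly a composite of a restriction functor in the $G$-category of algebras and a restriction along a map of operads, each of which is a right adjoint by \Cref{prop:formula-Kan-extension}.)

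Next I would import the non-equivariant input. For $n \ge 1$ the full subcategory $\Alggrp{\E{n}}{\Spaces} \subseteq \Alg{\E{n}}{\Spaces}$ is an accessible localization: it is closed under limits and under filtered colimits, since an $\E{n}$-algebra is group-like precisely when its shear map is an equivalence (a condition stable under limits), while being a group is stable under filtered colimits of monoids. By the standard criterion for presentable categories, such a subcategory equals the category of $S_n$-local objects for some small set $S_n$ of maps of $\E{n}$-algebras. Writing $n_H = \dim V^H$, I then set
\[ S = \left\{ F_H(s) \;:\; H \le G,\ n_H \ge 1,\ s \in S_{n_H} \right\}, \]
a small (indeed finite, since $G$ has finitely many subgroups) set of maps in $\Alg{\E{V}}{\GSpaces}$.

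Finally, the adjunction isomorphism $\map_{\Alg{\E{V}}{\GSpaces}}(F_H(-), A) \simeq \map_{\Alg{\E{n_H}}{\Spaces}}(-, A^H)$ shows that $A$ is $F_H(s)$-local if and only if $A^H$ is $s$-local. Hence $A$ is $S$-local if and only if $A^H$ is $S_{n_H}$-local, i.e. group-like, for every $H$ with $n_H \ge 1$ — which is exactly the condition that $A$ be group-like. Thus $\Alggrp{\E{V}}{\GSpaces}$ is the category of $S$-local objects in the presentable category $\Alg{\E{V}}{\GSpaces}$, and by the localization theorem for presentable categories \cite{lur09} the inclusion admits a left adjoint. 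The step I expect to be the main obstacle is the verification that $(-)^H$ admits a left adjoint $F_H$; once the fixed-point functor is understood well enough to see that it preserves limits and filtered colimits (equivalently, is a right adjoint), the remainder is formal. I would emphasize that this argument only produces the group completion abstractly, as a localization; its explicit computation is the substance of \Cref{thm:approximation-theorem}.
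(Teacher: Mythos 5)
Your proposal is correct and follows essentially the same route as the paper, whose one-line proof likewise exhibits $\Alggrp{\E{V}}{\GSpaces}$ as the local objects for a small set of maps encoding the shearing maps on $H$-fixed points and then invokes \cite[Prop. 5.5.4.15]{lur09}. The paper is slightly more direct — it takes the maps corepresenting the shear maps on $H$-fixed points themselves (so no left adjoints $F_H$ and no abstract criterion producing the sets $S_{n_H}$ are needed, and note that your $S$ is only small rather than finite, since the abstractly obtained $S_{n_H}$ need not be finite) — but this is a difference of bookkeeping, not of idea.
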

\begin{proof}
    The subcategory of group-like functors can be written as the class of local objects with respect to the set of morphisms corepresenting the shearing maps on $H$-fixed points, so the claim follows from \cite[Prop. 5.5.4.15]{lur09}.
\end{proof}
\begin{notation}
    We denote the left adjoint to the inclusion described in \Cref{prop:grpcompl-exists} by \[ \GrpCompl{\E{V}} \colon \Alg{\E{V}}{\GSpaces} \longrightarrow \Alggrp{\E{V}}{\GSpaces} \] and call it the \emph{group completion}.
\end{notation}
The problem hence is not to show the existence of this functor, but to explicitely compute it. 
Next, we will explain how to construct an $\E{V}$-algebra structure on a $V$-fold loop space.

\begin{construction} \label{constr:one-point-compactification}
    The following construction is discussed in detail in \cite[Sec. 2.3]{hkz24}, where the reader might find more details.
    There is a $G$-symmetric monoidal functor 
    \[ (-)^+ \colon (\ManUnfr)^{\sqcup} \longrightarrow \left(\left( \BGSpaces \right)^{\op} \right)^{\vee} \] where the target is equipped with the cartesian monoidal structure in the opposite $G$-category of $G$-spaces, i.e. the wedge product/induction. It sends a manifold to the homotopy type of its one point compactification and a map to its induced collapse map. 
\end{construction}
\begin{construction} \label{constr:loop-space}
    We will now recall the definition of the $V$-fold loop space functor from \cite[Def. 6.2.1]{hkz24}. It is defined to be the composite
    \[ \Omega^V \colon \Spaces_\ast^{G} \longrightarrow \Fun^{G-\otimes}\left(\left((\BGSpaces)^{\vee}\right)^{\op},(\GSpaces)^{\times}\right) \longrightarrow \Fun^{G-\otimes}\left(\Disk{V},\left(\GSpaces\right)^{\times}\right) \,. \] Here, the first functor is the Yoneda embedding. As any representable functor preserves $G$-limits, it indeed naturally factors through the category of $G$-symmetric monoidal functor from $(\BGSpaces)^{\op}$ and $\GSpaces$ equipped with their respective cartesian symmetric monoidal structure from \cite[Prop. 5.12]{sha23}. 
    The second functor is the restriction along the $G$-symmetric forgetful functor $\Disk{V} \hookrightarrow \Man{V} \to \ManUnfr$ and the functor $\ManUnfr \to \left((\GSpaces_\ast)^{\vee}\right)^{\op}$ from \Cref{constr:one-point-compactification}.
\end{construction}

Finally, we discuss equivariant factorization homology. 
We restrict our attention to factorization homology with values in the $G$-category of $G$-spaces.
\begin{definition} 
    We define the equivariant factorization homology as the left Kan extension along the inclusion $\Disk{V} \subset \Man{V}$:
    \[ \int \colon \Alg{\E{V}}{\GSpaces} \cong \Fun^{G-\otimes}(\Disk{V},\GSpaces) \longrightarrow \Fun^{G}(\Disk{V},\GSpaces)\longrightarrow \Fun^{G}(\Man{V},\GSpaces) \,, \] 
    that is for $A$ an $\E{V}$-algebra in $\GSpaces$ and $M$ a $V$-framed manifold, the equivariant factorization homology of $A$ over $M$ is given by:
    \[  \int_M A \cong \colim \left({\overslice{\Disk{V}}{M}} \longrightarrow \Disk{V} \xlongrightarrow{A} \GSpaces  \right) \] 
\end{definition}
\begin{remark}
    Using a lemma similar to \Cref{prop:formula-Kan-extension} (essentially \cite[Thm. 3.39]{llp25}), Horev shows that this Kan extension naturally admits a $G$-symmetric monoidal structure. We will not make use of this enhancement.
\end{remark}
\begin{construction} \label{constr:NAPD}
    Let $X$ be a based $G$-space and let $M$ be a $V$-framed manifold, we will recall from \cite{hkz24} how to construct a natural map
    \[ \int_M \Omega^V X \longrightarrow \map_\ast(M^+,X) \] of $G$-spaces. 
    It follows from \Cref{constr:loop-space} that $\Omega^V X \colon \Disk{V} \longrightarrow \GSpaces$ can actually be extended to a functor out of $\Man{V}$ such that the value at $M$ is $\map_\ast(M^+,X)$. It hence follows that there is a natural transformation
    \[ \left( \overslice{\Disk{V}}{M} \longrightarrow \Disk{V} \xrightarrow{\Omega^VX} \GSpaces \right) \Longrightarrow \const \map_\ast(M^+,X) \,,.  \] 
    The natural map we wanted to construct now arises by using the universal property of parameterized colimits.
\end{construction}
Before we state equivariant nonabelian Poincaré duality, let us recall the following definition from the introduction:
\begin{definition}
    A based $G$-space $X$ is called $V$-connective if its $H$-fixed points $X^H$ are $(\dim V^H -1)$-connected for every subgroup $H$ of $G$. 
    We write $(\Spaces_\ast^G )_{\ge V} \subset \Spaces_\ast^G$ for the full subcategory of $V$-connective spaces. 
\end{definition}
\begin{theorem}[{\cite[Thm. 4.0.1]{hkz24}}, equivariant nonabelian Poincaré duality] \label{thm:NAPD}
    For $X$ a $V$-connective based $G$-space and $M$ a $V$-framed manifold, the natural map from \Cref{constr:NAPD} \[ \int_M \Omega^V X \longrightarrow \map_\ast(M^+,X) \] is an equivalence of $G$-spaces.
\end{theorem}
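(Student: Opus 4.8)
The plan is to prove the equivalence by recognizing both sides as \enquote{homology theories} on the $G$-category $\Man{V}$ of $V$-framed $G$-manifolds and then invoking the fact that such theories are determined by their restriction to disks. Concretely, a $G$-functor $F \colon \Man{V} \to \GSpaces$ deserves to be called a homology theory if it is $G$-symmetric monoidal (sending disjoint unions to products and intertwining the norm/induction structure) and satisfies $\otimes$-excision, i.e. carries every collar gluing $M \cong M_0 \cup_{M_{01} \times \R} M_1$ to the appropriate relative tensor product of its values, and likewise for the equivariant gluings along orbits. By its very definition as the left Kan extension along $\Disk{V} \subset \Man{V}$, the left-hand side $M \mapsto \int_M \Omega^V X$ is the universal such theory extending its restriction to disks; this excision and uniqueness property for equivariant factorization homology is the structural content developed in \cite{hor19}. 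The strategy therefore reduces to two things: showing that $M \mapsto \map_\ast(M^+, X)$ is also a homology theory, and that the comparison map of \Cref{constr:NAPD} is an equivalence on disks. Uniqueness of excisive extensions then upgrades this to an equivalence on all of $\Man{V}$, and in particular on the given manifold $M$.

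The agreement on disks is immediate and pins down the shared input datum. For the generating $G$-disk $D(V)$, factorization homology over a disk recovers the algebra, so the left side evaluates to $\Omega^V X$; on the other side $(D(V))^+ \simeq S^V$, so $\map_\ast\bigl((D(V))^+, X\bigr) = \map_\ast(S^V, X) = \Omega^V X$, matching. The same holds for the induced disks, where both sides return the corresponding normed loop space $\Nm{G}{H}\bigl(\Omega^{\res{G}{H} V} X\bigr)$. Because the comparison map of \Cref{constr:NAPD} is built precisely by extending $\Omega^V X$ to the functor $\map_\ast((-)^+, X)$ on $\Man{V}$ and then applying the universal property of the colimit defining $\int_M$, its restriction to disks is tautologically this identification, and so it is an equivalence there.

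The heart of the argument is to verify that $M \mapsto \map_\ast(M^+, X)$ satisfies $\otimes$-excision, and this is exactly where the $V$-connectivity hypothesis enters. One-point compactification is contravariant and sends a collar gluing to a cospan of based $G$-spaces, so the collapse maps assemble into a square
\[ \begin{tikzcd}[sep=large]
\map_\ast(M^+,X) \ar[r]\ar[d] & \map_\ast(M_1^+, X) \ar[d] \\
\map_\ast(M_0^+, X) \ar[r] & \map_\ast\bigl((M_{01}\times\R)^+, X\bigr)
\end{tikzcd} \]
and one must show that it is cartesian and that the corresponding bar construction computes the relative tensor product appearing in excision for the left side. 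Checking this on $H$-fixed points reduces the assertion to the non-equivariant statement that compactly supported maps into $X^H$ satisfy excision, which holds by a Freudenthal/Blakers--Massey argument exactly when $X^H$ is $(\dim V^H - 1)$-connected — that is, when the connectivity of the target dominates the dimension $\dim V^H$ of the $H$-fixed stratum. The induced-disk (norm) gluings are handled in parallel by comparing the $H$-fixed points of $\map_\ast\bigl((G \times_H N)^+, X\bigr)$ with $\map_\ast(N^+, \res{G}{H} X)$, which keeps the excision genuinely $G$-symmetric monoidal.

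I expect the main obstacle to be precisely this last verification: ensuring that the single numerical hypothesis \enquote{$X^H$ is $(\dim V^H-1)$-connected for every $H$} is simultaneously strong enough to make the excision comparison an equivalence on \emph{every} fixed-point stratum at once, since a collar gluing interacts differently with the fixed points of different subgroups and the relevant dimensions $\dim V^H$ vary with $H$. Organizing this bookkeeping — together with the compatibility of the collapse maps with the norm structure, so that excision respects the $G$-symmetric monoidal structure and the bar constructions match across all strata — is the technical core of the proof; once it is in place, the formal uniqueness of homology theories on $\Man{V}$ closes the argument.
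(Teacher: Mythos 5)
The paper does not prove \Cref{thm:NAPD} at all: it is imported verbatim from Horev--Klang--Zou \cite{hkz24}, so your proposal can only be compared against the proof given there. Your overall architecture does match that proof: equivariant factorization homology is determined by its values on disks together with $G$-$\otimes$-excision (Horev's equivariant version of the Ayala--Francis axiomatics, from \cite{hor19}), the comparison map of \Cref{constr:NAPD} is tautologically an equivalence on disks, and the entire content is to show that $M \mapsto \map_\ast(M^+,X)$ is $\otimes$-excisive when $X$ is $V$-connective. So the plan is the right one, and your identification of the technical core is honest.

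However, your execution of that core step contains two genuine errors. First, the variance of your square is backwards: by \Cref{constr:one-point-compactification}, one-point compactification sends an embedding to a collapse map, so $\map_\ast((-)^+,X)$ is \emph{covariant} for open embeddings (extension by the base point); the restriction maps $\map_\ast(M^+,X)\to\map_\ast(M_i^+,X)$ in your square do not exist, since a compactly supported map does not restrict to a compactly supported map on an open piece. Moreover, $\otimes$-excision is not the assertion that a square is cartesian: the square that \emph{is} cartesian (the one with extension-by-basepoint maps, with $\map_\ast((M_{01}\times\R)^+,X)$ in the initial corner and $\map_\ast(M^+,X)$ in the terminal corner) is cartesian for \emph{every} $X$, with no connectivity hypothesis, because compactification sends a collar gluing to a homotopy pushout; this unconditional statement does not imply excision. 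What must be proved is that the two-sided bar construction of $\map_\ast(M_0^+,X)$ and $\map_\ast(M_1^+,X)$ over the $\E{1}$-algebra $\map_\ast((M_{01}\times\R)^+,X)\simeq\Omega\map_\ast(M_{01}^+,X)$ maps by an equivalence to $\map_\ast(M^+,X)$, and this is exactly what fails for non-connective $X$. Second, and more seriously, your proposed reduction of excision to the non-equivariant statement for $X^H$ on $H$-fixed points is false: $\map_\ast(M^+,X)^H$ is the space of $H$-equivariant based maps $M^+\to X$, which depends on all isotropy strata of $M$ and on all fixed-point spaces $X^K$ for $K\le H$, not on $M^H$ and $X^H$ alone (for instance $\map_\ast(S^V,X)^G\neq\map_\ast(S^{V^G},X^G)$; indeed the Hauschild splitting used later in this very paper exhibits $(\Omega^V\Sigma^V X)^G$ with a factor $\map_\ast(S(V)_+,\Sigma^V X)^G$ that sees the free strata). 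The place where $V$-connectivity actually enters is different: the cut locus $M_{01}$ is a $G$-manifold of dimension $\dim V-1$ whose $H$-fixed points have dimension at most $\dim V^H-1$, so equivariant obstruction theory, cell by cell over the orbits of $M_{01}$, shows that every equivariant map $M_{01}\to X$ is null-homotopic; this is what allows one to compress a compactly supported map on $M$ away from the cut, giving the surjectivity that, combined with group-likeness of the loop space above, makes the bar construction converge. Without replacing your stratum-by-stratum reduction by such a genuinely equivariant argument, the proof does not go through.
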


\section{The approximation theorem}
In this section, we will proof the approximation theorem. We recommend the reader to recall the proof strategy from the introduction.

\subsection{Factoring the free group-like functor}
In this subsection, we will introduce the immediate $G$-operads $\E{0} \subset \Eiso{V} \subset \Eisoext{V} \subset \E{V}$, the latter one we informally described when explaining the strategy of the proof in the introduction. 

Let us introduce a terminology for the subgroups of $H$ of $G$ for which $A^H$ admits a monoid structure for $A$ an $\E{V}$-algebra. 
\begin{definition}
    For $V$ a $G$-representation, we say that a subgroup $H$ of $G$ is \emph{$V$-isotropy} if $\dim V^H \ge 1$. 
\end{definition}
We will now define the operad $\Eisoext{V} \subset \E{V}$. The idea is that in this operad, we do not allow for any operations going from $V$-isotropy subgroups to subgroups which are not $V$-isotropy.
\begin{definition}
    Let $\Diskisoext{V} \subset \Disk{V}$ denote the non-full $G$-symmetric monoidal subcategory spanned by all objects but on $H$-fixed points, but we only take those embeddings $f \colon D_1 \to D_2$ of $\res{G}{H} V$-framed $H$-disks such that for an $H$-path component $D \subset D_2$ such that $D^K \neq \emptyset$ for some non-$V$-isotopy $K<H$, the same is true for all path components in $f^{-1}(D)$.
    
    We can use \Cref{prop:criterion-sub-operad} to see that this is an enveloping algebra of a $G$-operad with a map to $\E{V}$ which we will denote by $\Eisoext{V}$. We therefore denote the category of $\Eisoext{V}$-algebras in a $G$-symmetric monoidal $G$-category $\CC$ by \[ \Alg{\Eisoext{V}}{\CC} = \Fun^{G,\otimes}\left( \Diskisoext{V} , \CC \right) \,. \]
\end{definition}

It will also be helpful to have the following operad, which does not remember any fixed point data for subgroups which are not $V$-isotropy.
\begin{definition}
    We write $\Diskiso{V}$ for the full $G$-symmetric monoidal subcategory of $\Disk{V}$ (and $\Diskisoext{V}$) spanned on $H$-fixed points by all $H$-disks for which all isotropy groups are $V$-isotropy.
    Using \Cref{prop:criterion-sub-operad}, we see that this again is an enveloping algebra of a $G$-operad with a map to $\Eisoext{V}$ which we will denote by $\Eiso{V}$. 
    For $\CC$ a $G$-symmetric monoidal $G$-category, we hence write \[ \Alg{\Eiso{V}}{\CC} = \Fun^{G,\otimes}\left( \Diskiso{V} , \CC \right) \] for the category of $\Eiso{V}$-algebras in $\CC$. 
\end{definition}
Finally, let us introduce the corresponding notion of group-like algebras for those operads and record that group completion exists. (Even though in this case we will construct those group completions in the proofs of \Cref{prop:grpcompl-le-H} and \Cref{prop:Eisoext-grp-compl-is-levelwise} directly without assuming its a priori existence anyway.)
\begin{definition}
    Following our conventions for the $\E{V}$-operad, given an $\Eisoext{V}$-algebra $A$, we write $A^H$ for the based space obtained by taking $H$-fixed points of the $H$-space which is given by evaluating the $G$-symmetric monoidal functor $\Diskisoext{V} \to \GSpaces$ at the $H$-disk $\res{G}{H} V$, receiving a base point from the unique map from the empty disk into $\res{G}{H} V$. If $H$ is $V$-isotropy, then these fixed points admit a natural $\E{V^H}$-algebra structure and this already is well-defined for an $\Eiso{V}$-algebra.
    We call a $\Eisoext{V}$- (or $\Eiso{V}$-)algebra in spaces $A$ \emph{group-like} if $A^H$ is group-like for all $H$ which are $V$-isotropy and denote the corresponding subcategories of group-like algebras by $\Alggrp{\Eisoext{V}}{\GSpaces}$ and $\Alggrp{\Eiso{V}}{\GSpaces}$, respectively.
    The same argument as in the proof of \Cref{prop:grpcompl-exists} shows that there are left adjoints to the inclusions which we will denote by
    \begin{align*}
        \GrpCompl{\Eisoext{V}} \colon \Alg{\Eisoext{V}}{\GSpaces} & \longrightarrow \Alggrp{\Eisoext{V}}{\GSpaces} 
        \intertext{and}
        \GrpCompl{\Eiso{V}} \colon \Alg{\Eiso{V}}{\GSpaces} & \longrightarrow \Alggrp{\Eiso{V}}{\GSpaces} \,.
    \end{align*}
\end{definition} 
Before we get into the proof, let us record the following fact, which will be useful in order to deal with the $V$-fold loop space functor:
\begin{proposition} \label{prop:monadicity-OmegaV}
    The $V$-fold loop space functor \[ \Omega^V \colon (\Spaces^G_\ast)_{\ge V} \longrightarrow \Spaces^G_\ast \] from $V$-connective based $G$-spaces to based $G$-spaces is conservative and commutes with geometric realizations.
\end{proposition}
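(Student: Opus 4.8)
The plan is to verify the two claimed properties—conservativity and commutation with geometric realizations—by reducing everything to fixed points, where $\Omega^V$ becomes an ordinary (non-equivariant) $V^H$-fold loop space functor, and where both properties are classical. First I would recall that equivalences, conservativity, and geometric realizations in $\Spaces^G_\ast$ are all detected fixed-point-wise: a map $f \colon X \to Y$ of based $G$-spaces is an equivalence if and only if $f^H \colon X^H \to Y^H$ is an equivalence of spaces for every subgroup $H \le G$, and geometric realizations of simplicial objects in $\Spaces^G_\ast$ are likewise computed fixed-point-wise since the fixed-point functors $(-)^H \colon \Spaces^G_\ast \to \Spaces_\ast$ preserve all colimits (they are, after all, evaluation at the orbit $G/H$ composed with a forgetful functor, hence preserve sifted colimits). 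The key compatibility I would use is that taking $H$-fixed points intertwines the equivariant loop space with an ordinary loop space, namely $(\Omega^V X)^H \simeq \Omega^{V^H} (X^H) = \Omega^{\dim V^H}(X^H)$; this follows from $\Omega^V X = \map_\ast(S^V, X)$ together with $\map_\ast(S^V, X)^H = \map_\ast^H(S^V, X) = \map_\ast((S^V)^H, X^H) = \map_\ast(S^{V^H}, X^H)$, using that $(S^V)^H = S^{V^H}$.

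With this reduction in hand, conservativity is immediate: if $f \colon X \to Y$ is a map of $V$-connective based $G$-spaces such that $\Omega^V f$ is an equivalence, then $(\Omega^V f)^H = \Omega^{\dim V^H}(f^H)$ is an equivalence for every $H$. Since $X$ and $Y$ are $V$-connective, each $X^H$ and $Y^H$ is $(\dim V^H - 1)$-connected, so the non-equivariant $\Omega^{\dim V^H}$ is conservative on $(\dim V^H - 1)$-connected spaces—this is exactly the statement that an $n$-fold loop map between $(n-1)$-connected spaces inducing an equivalence after $\Omega^n$ is already an equivalence, which one sees by noting that $\Omega^n$ shifts homotopy groups down by $n$ and that connectivity guarantees no information is lost below degree $n$. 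Hence $f^H$ is an equivalence for all $H$, and therefore $f$ is an equivalence.

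For commutation with geometric realizations, let $X_\bullet$ be a simplicial object in $(\Spaces^G_\ast)_{\ge V}$. I would argue that the natural comparison map $|\Omega^V X_\bullet| \to \Omega^V |X_\bullet|$ is an equivalence by checking it on $H$-fixed points, where it becomes the comparison map $|\Omega^{\dim V^H}(X_\bullet^H)| \to \Omega^{\dim V^H}|X_\bullet^H|$ (using that $(-)^H$ preserves geometric realizations and intertwines $\Omega^V$ with $\Omega^{\dim V^H}$ as above). The non-equivariant fact that $\Omega^n$ commutes with geometric realizations of simplicial $(n-1)$-connected spaces is the classical input underlying Segal's approximation theorem; it holds because on sufficiently connected spaces the loop functor is, up to the relevant range, exact, and geometric realization preserves the requisite connectivity. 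The one point requiring care is that the simplicial object $X_\bullet^H$ genuinely lands in $(\dim V^H - 1)$-connected spaces so that the non-equivariant result applies; this is guaranteed by the hypothesis that $X_\bullet$ is levelwise $V$-connective.

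The main obstacle I anticipate is not the formal reduction but pinning down the precise non-equivariant commutation statement for $\Omega^n$ with geometric realizations and ensuring the connectivity hypotheses are met term-by-term. In the non-equivariant setting this is exactly the technical heart of the recognition principle, so I would either cite it directly or reduce to a statement about $\Omega^n$ preserving sifted colimits of suitably connective spaces; the equivariant case then follows purely formally from fixed-point detection once that input is secured.
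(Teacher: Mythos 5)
Your argument hinges on the identity $(\Omega^V X)^H \simeq \Omega^{\dim V^H}(X^H)$, and that identity is false whenever $H$ acts nontrivially on $V$. What is true is $\map_\ast(S^V,X)^H \simeq \map_\ast^H(S^V,X)$, the space of $H$-\emph{equivariant} based maps; your next step, $\map_\ast^H(S^V,X) \simeq \map_\ast((S^V)^H,X^H)$, fails because an equivariant map must be specified on all of $S^V$, not only on its fixed points. Concretely, take $G=H=C_2$ and $V=\sigma$ the sign representation, so $\dim V^{C_2}=0$. Mapping the cofiber sequence $S(\sigma)_+ \to S^0 \to S^\sigma$ into $X$ and taking fixed points gives a fiber sequence
\[ (\Omega^\sigma X)^{C_2} \longrightarrow X^{C_2} \longrightarrow X^e \,, \]
where $X^e$ denotes the underlying space and the second map is the inclusion of fixed points; thus $(\Omega^\sigma X)^{C_2}$ is the homotopy fiber of $X^{C_2} \to X^e$, not $\Omega^0(X^{C_2}) = X^{C_2}$, and $V$-connectivity of $X$ does not repair this. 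The error is not a fixable technicality but the central phenomenon of the subject: if your formula were true, fixed points of $V$-fold loop spaces (and of $\E{V}$-algebras) would be computed levelwise, group completion would be levelwise, and the paper's main theorems would follow immediately from May's nonequivariant results. Since both halves of your proof rest on this formula, both collapse: conservativity does not reduce to the nonequivariant statement on fixed points, and the comparison map for geometric realizations cannot be checked fixed-point-wise in the way you propose.

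For comparison, the paper treats the two claims by genuinely different means. Commutation with geometric realizations is not formal; it is cited from Costenoble--Waner \cite[Lem. 5.4]{cw91}. Conservativity is proved by induction on the order of $G$: after splitting off any trivial summand one may assume $V^G=\{0\}$; mapping the cofiber sequence $S(V)_+ \to S^0 \to S^V$ into $f\colon X \to Y$ produces compatible fiber sequences $(\Omega^V X)^G \to X^G \to \map(S(V),X)^G$ and likewise for $Y$. The map on fibers is an equivalence by hypothesis, and the map on bases is an equivalence by cell induction, since $S(V)$ has only proper isotropy and $f^H$ is an equivalence for proper $H$ by the inductive hypothesis. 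Finally, $V$-connectivity enters through the cell structure of $S(V)$, whose $H$-cells have dimension at most $\dim V^H - 1$: every map $S(V) \to X$ into a $V$-connective space is null, so the bases are connected and one may conclude from the fiber sequences that $f^G$ is an equivalence. None of these inputs --- the citation, the induction on $\lvert G \rvert$, the fiber sequence over $\map(S(V),-)^G$, or the nullity argument --- has an analogue in your proposal, and some substitute for each of them is needed.
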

\begin{proof}
    The claim about geometric realizations is proven in \cite[Lem. 5.4]{cw91}.

    Let us turn to the conservativity claim. If $V = W \oplus \R$ contains a trivial summand, we might write $\Omega^V  = \Omega^{W} \circ \Omega $ and reduce to the same claim for $W$. We might hence assume that $V$ is fixed point free. 

    Let $f\colon X \to Y$ be a map of $V$-connective $G$-spaces such that $\Omega^V f \colon \Omega^V X \to \Omega^V Y$ is an equivalence. By inducting on the size of the group $G$, we might assume that $f^H \colon X^H \to Y^H$ is an equivalence for all proper subgroups $H$ of $G$. 
    Now consider the cofiber sequence \[ S(V)_+ \longrightarrow S^0 \longrightarrow S^V \,. \] Mapping into $f \colon X \to Y$ yields a map between fiber sequences 
    \[ \begin{tikzcd}
        (\Omega^V X)^G \ar[r]  \ar[d] & X^G \ar[r] \ar[d] & \map(S(V),X)^G \ar[d] \\
        (\Omega^V Y)^G \ar[r]  & Y^G \ar[r] & \map(S(V),Y)^G
     \end{tikzcd} \] 
     The right hand map is an equivalence because $S(V)$ does not have $G$-fixed points, and we already assumed that $f$ is an equivalence on fixed points for all proper subgroups. The left hand map is an equivalence by assumption.
     Finally, $S(V)$ admits an equivariant cell structures with $H$-cells at most of dimension $\dim V^H - 1$. By cell induction, we conclude that any map from $S(V)$ into a $V$-connective space must be trivial. It follows that the base space of the two fibrations is connected and we can conclude that the map $f^G \colon X^G \to Y^G$ is an equivalence, as desired.
\end{proof}
From this we deduce: 
\begin{lemma} \label{lem:monadicity-E-Omega}
    The $V$-fold loop space functor 
    \[ \Omega^V \colon (S_\ast^G)_{\ge V} \longrightarrow \Alg{\E{V}}{\GSpaces} \] commutes with geometric realizations. The same holds when replacing $\E{V}$ with $\Eiso{V}$.
\end{lemma}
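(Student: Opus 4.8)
The plan is to deduce both statements formally from the monadicity of the forgetful functor together with \Cref{prop:monadicity-OmegaV}. By \Cref{prop:monadicity-forgetful}, the forgetful functor
\[ \fgtOne{\E{V}} \colon \Alg{\E{V}}{\GSpaces} \longrightarrow \Fun(\colors(\E{V}),\GSpaces) \]
is conservative and preserves geometric realizations. Since it is conservative, it suffices to show that the canonical comparison map $\lvert \Omega^V X_\bullet \rvert \to \Omega^V \lvert X_\bullet \rvert$ associated to a simplicial object $X_\bullet$ in $(\Spaces_\ast^G)_{\ge V}$ becomes an equivalence after applying $\fgtOne{\E{V}}$. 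As $\fgtOne{\E{V}}$ preserves geometric realizations, this reduces to showing that the composite $\fgtOne{\E{V}} \circ \Omega^V$ preserves geometric realizations. (Here we use that $(\Spaces_\ast^G)_{\ge V}$ is closed under geometric realizations inside $\Spaces^G_\ast$, since a geometric realization of $(\dim V^H - 1)$-connected spaces is again $(\dim V^H -1)$-connected, so that the source of the comparison map is the expected object.)

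To identify this composite, note that $\E{V}$ has a single color, so $\fgtOne{\E{V}}$ assigns to an algebra its underlying $G$-space. Consequently $\fgtOne{\E{V}} \circ \Omega^V$ agrees with the $V$-fold loop space functor $\Omega^V \colon (\Spaces_\ast^G)_{\ge V} \to \Spaces^G_\ast$ of \Cref{prop:monadicity-OmegaV} followed by the functor forgetting the basepoint. The first preserves geometric realizations by \Cref{prop:monadicity-OmegaV}, and the second does as well: it is the projection off a coslice category, which preserves colimits indexed by the weakly contractible category $\Delta^{\op}$. Hence $\fgtOne{\E{V}} \circ \Omega^V$ preserves geometric realizations, which proves the claim for $\E{V}$.

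For $\Eiso{V}$ the argument is identical: $\fgtOne{\Eiso{V}}$ is again conservative and preserves geometric realizations by \Cref{prop:monadicity-forgetful}, and the comparison map may be tested after applying it. Moreover, the $\Eiso{V}$-algebra structure on $\Omega^V X$ is the restriction of the $\E{V}$-algebra structure along $\Eiso{V} \to \E{V}$, so that $\fgtOne{\Eiso{V}} \circ \Omega^V$ is obtained from $\fgtOne{\E{V}} \circ \Omega^V$ by restriction along the inclusion of colors $\colors(\Eiso{V}) \subset \colors(\E{V})$. Since geometric realizations in these functor categories are computed pointwise, restriction preserves them, and the previous paragraph applies.

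I expect the argument to be essentially a repackaging of \Cref{prop:monadicity-forgetful} and \Cref{prop:monadicity-OmegaV}, with the only points requiring care being the two bookkeeping claims used above: that the comparison map for $\Omega^V$ into algebras is carried by the forgetful functor to the comparison map for the underlying loop space functor, and that $(\Spaces_\ast^G)_{\ge V}$ is closed under geometric realizations. Both are routine, so I do not anticipate a genuine obstacle.
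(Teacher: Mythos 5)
Your proof is correct and follows exactly the paper's argument: the paper's own proof is the one-line ``combine \Cref{prop:monadicity-forgetful} and \Cref{prop:monadicity-OmegaV},'' i.e.\ test the comparison map after the conservative, realization-preserving forgetful functor and identify the composite with the underlying loop space functor, which is precisely what you spell out. Your additional bookkeeping (closure of $(\Spaces_\ast^G)_{\ge V}$ under geometric realizations, and the restriction-along-colors argument for $\Eiso{V}$) is a faithful elaboration of details the paper leaves implicit.
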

\begin{proof}
    This follows from combining \Cref{prop:monadicity-forgetful} and \Cref{prop:monadicity-OmegaV}.
\end{proof}
\begin{lemma} \label{lem:H-fixed-point-geom-real}
    The $H$-fixed point functors
    \[ (-)^H \colon \Alg{\E{V}}{\GSpaces} \to \Alg{\E{V^H}}{\Spaces} \]
    commutes with geometric realizations. The same is true when replacing $\E{V}$ with $\Eiso{V}$. 
\end{lemma}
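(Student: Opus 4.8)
The plan is to reduce the statement to the elementary fact that genuine fixed points and geometric realizations are both computed pointwise on underlying $G$-spaces, and then transport this back across a conservative forgetful functor. The backbone is the square
\[
\begin{tikzcd}[column sep=huge]
\Alg{\E{V}}{\GSpaces} \ar[r,"(-)^H"] \ar[d,"{\fgtOne{\E{V}}}"'] & \Alg{\E{V^H}}{\Spaces} \ar[d,"{\fgtOne{\E{V^H}}}"] \\
\Spaces^G \ar[r,"(-)^H"'] & \Spaces
\end{tikzcd}
\]
in which the vertical functors forget down to underlying (genuine) $G$-spaces and spaces respectively --- here I use that $\E{V}$ and $\E{V^H}$ are singly-coloured, so that $\Fun(\colors(\E{V}),\GSpaces)^G \cong \Spaces^G$ and $\Fun(\colors(\E{V^H}),\Spaces) \cong \Spaces$ --- and the bottom arrow is the genuine $H$-fixed-point functor. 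The first thing I would check is that this square commutes, which is essentially by construction: the top functor is induced by $\Disk{V^H}\to\Disk{V}^H$ followed by passage to genuine $H$-fixed points, so the underlying space of $A^H$ is precisely the $H$-fixed points of the underlying $G$-space of $A$.

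With the square in place, I would record three preservation properties. By \Cref{prop:monadicity-forgetful}, both forgetful functors $\fgtOne{\E{V}}$ and $\fgtOne{\E{V^H}}$ are conservative and preserve geometric realizations (the latter being the non-equivariant case $G=e$). For the bottom arrow, I would note that $(-)^H\colon\Spaces^G\to\Spaces$ is evaluation of a genuine $G$-space at the orbit $G/H$, hence preserves all colimits, in particular geometric realizations.

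The key step is then a short diagram chase. Given a simplicial object $A_\bullet$ in $\Alg{\E{V}}{\GSpaces}$, there is a canonical comparison map $|A_\bullet^H| \to |A_\bullet|^H$ from the realization formed in $\Alg{\E{V^H}}{\Spaces}$ to the $H$-fixed points of the realization formed in $\Alg{\E{V}}{\GSpaces}$. Applying the conservative functor $\fgtOne{\E{V^H}}$ and using the commuting square together with the three preservation facts, both the source and the target are identified with the realization $|(\fgtOne{\E{V}}A_\bullet)^H|$ computed in $\Spaces$; since $\fgtOne{\E{V^H}}$ is conservative, the comparison map is an equivalence, which is exactly the assertion that $(-)^H$ preserves geometric realizations.

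Finally, the $\Eiso{V}$ case runs along the same lines, replacing $\E{V}$ by $\Eiso{V}$ throughout. \Cref{prop:monadicity-forgetful} applies to any $G$-operad, so the forgetful functor $\Alg{\Eiso{V}}{\GSpaces}\to\Fun(\colors(\Eiso{V}),\GSpaces)$ is again conservative and preserves geometric realizations, and $(-)^H$ on this functor category is still computed pointwise and therefore preserves realizations. The one place that demands care --- and the likeliest obstacle --- is the commutativity of the analogue of the square for $\Eiso{V}$: because $\Eiso{V}$ is not singly-coloured, I would need to track the induced functor $\Fun(\colors(\Eiso{V}),\GSpaces)\to\Fun(\colors(\E{V^H}),\Spaces)$ on underlying objects and confirm that forming the genuine fixed-point space $A^H$ commutes with forgetting the operadic structure, exactly as in the first step above.
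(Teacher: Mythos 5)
Your proof is correct and is essentially the paper's own argument: both reduce the claim to \Cref{prop:monadicity-forgetful} (geometric realizations of algebras are computed on underlying $G$-spaces/spaces via the conservative, realization-preserving forgetful functors) combined with the fact that the $H$-fixed point functor, being evaluation at an orbit, commutes with all colimits of $G$-spaces. Your additional care about the colors of $\Eiso{V}$ not forming the terminal $G$-category only spells out what the paper's terse proof leaves implicit, and the argument goes through unchanged there.
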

\begin{proof}
    This follows immediately from \Cref{prop:monadicity-forgetful}, as geometric realizations in both categories can be computed after applying the forgetful functor to $G$-spaces or spaces, respectively. For $G$-spaces, the $H$-fixed point functor commutes with all colimits.
\end{proof}
\subsection{Group completion for \texorpdfstring{$\Eiso{V}$}{EiV}-algebras}
As a first step, we will describe the group completion functor for $\Eiso{V}$-algebras, this step will be the main input to compute the effect of group completion on an $\E{V}$-algebra on $H$-fixed points for $H$ a $V$-isotropy subgroup.
The following proposition is the main geometric input we are using:
\begin{theorem}[Hauschild] \label{rourke-sanderson}
    Let $X$ be a based $G$-space and $H$ a $V$-isotropy subgroup of $G$. 
    Then the natural map \[ \GrpCompl{\E{V^H}} \left(  \Free{\Eiso{V}} X \right)^H \longrightarrow \left( \fgt{\E{V}}{\Eiso{V}} \Omega^V \Sigma^V X \right)^H \] is an equivalence of $\E{V^H}$-algebras. 
\end{theorem}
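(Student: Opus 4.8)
The plan is to reduce the statement to Hauschild's classical equivariant approximation theorem by recognizing that on $H$-fixed points everything becomes a non-equivariant (or rather $H$-equivariant) problem about little disks in a representation containing a trivial summand. The key observation driving the whole argument is that $H$ being $V$-isotropy means $\dim V^H \geq 1$, i.e. $\res{G}{H} V$ contains a trivial summand. This is precisely the regime in which Hauschild's theorem applies, so the task is to translate both sides of the claimed equivalence into a form where his result can be invoked directly.

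**First I would** unwind the left-hand side using the explicit free algebra formula. By \Cref{prop:formula-Kan-extension}, the free $\Eiso{V}$-algebra $\Free{\Eiso{V}} X$ is computed as an operadic left Kan extension, and by \Cref{lem:H-fixed-point-geom-real} the $H$-fixed point functor $(-)^H$ commutes with the relevant colimits. Combined with the identification of $(-)^H$ on algebras as landing in $\Alg{\E{V^H}}{\Spaces}$ (via the functor $\Disk{V^H} \to \Disk{V}^H$ recalled in the preliminaries), the fixed points $\left(\Free{\Eiso{V}} X\right)^H$ should be identified with the underlying $\E{V^H} \cong \E{\dim V^H}$-algebra of a free (non-equivariant) little-disks algebra, i.e.\ a configuration space of little $\res{G}{H}V$-framed disks with labels in $X$, restricted to the isotropy-full part. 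Since $\Eiso{V}$ remembers no fixed-point data below the $V$-isotropy threshold, on $H$-fixed points for $V$-isotropy $H$ this matches the structure of the full $\E{V}$-algebra, so no information is lost in passing to $\Eiso{V}$.

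**Next I would** handle the right-hand side. The term $\left(\Omega^V \Sigma^V X\right)^H$ is, by the discussion following \Cref{constr:loop-space}, the $\E{\dim V^H}$-algebra $\Omega^{V^H}\left(\Sigma^V X\right)^H$; since $(-)^H$ commutes with one-point compactification and smash products appropriately, $\left(\Sigma^V X\right)^H \simeq \Sigma^{V^H}\left(\text{something built from } X\right)$, reducing the target to a $V^H$-fold loop space of a $V^H$-fold suspension in the non-equivariant category of $H$-fixed data. At this point both sides are expressed as a group completion of a free little-disks algebra and a loop-suspension, respectively, over a representation with a nonzero trivial summand — exactly Hauschild's setting \cite{hau80}. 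Invoking his theorem (and its remark that the general based space case works as the basepoint case does) gives the equivalence, and one checks the natural map constructed here agrees with his under these identifications by naturality of the group completion and of the approximation map.

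**The hard part will be** the bookkeeping in the first step: carefully identifying the $H$-fixed points of the operadic left Kan extension defining $\Free{\Eiso{V}}$ with an honest free $\E{\dim V^H}$-algebra, and verifying that the group completion $\GrpCompl{\E{V^H}}$ applied on fixed points corresponds to the group completion implicit in Hauschild's theorem. The subtlety is that group completion does not generally commute with fixed points, so one must argue that for \emph{$V$-isotropy} $H$ the relevant configuration space only sees disks whose isotropy is itself $V$-isotropy, so that the obstruction coming from non-$V$-isotropy strata (which is precisely what distinguishes $\Eiso{V}$ from $\Eisoext{V}$ and $\E{V}$) does not appear. Once this is established, the commutation of $(-)^H$ with geometric realizations from \Cref{lem:H-fixed-point-geom-real} lets the bar-resolution computing group completion be pushed through the fixed-point functor, closing the argument.
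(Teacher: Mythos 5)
There is a genuine gap, and it sits in your treatment of the right-hand side. You assert that $\left(\Omega^V \Sigma^V X\right)^H$ is the $\E{\dim V^H}$-algebra $\Omega^{V^H}\left(\Sigma^V X\right)^H$, i.e.\ that taking genuine fixed points turns the $V$-fold loop space into a $\dim V^H$-fold loop space of fixed-point data. This is false: $\left(\Omega^V Y\right)^H = \map_\ast\left(S^V,Y\right)^H$ is the space of \emph{$H$-equivariant} based maps out of all of $S^V$, not maps out of $\left(S^V\right)^H$. Writing $\res{G}{H}V \cong V^H \oplus W$, one has
\[ \left(\Omega^V Y\right)^H \simeq \Omega^{\dim V^H}\left( \left( \Omega^{W} Y \right)^H \right) , \]
a $\dim V^H$-fold loop space of the auxiliary space $\left(\Omega^W Y\right)^H$, not of $Y^H$. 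Were your identification correct, the theorem would collapse to the non-equivariant Segal--May approximation theorem applied to $X^H$, with target $\Omega^{\dim V^H}\Sigma^{\dim V^H}\left(X^H\right)$. That contradicts Hauschild's splitting $\left(\Omega^V\Sigma^V X\right)^G \simeq X^G \times \map_\ast\left(S(V)_+,\Sigma^V X\right)^G$ \cite{hau77}, which the paper uses later, and it is also incompatible with your own (correct) description of the left-hand side: the $H$-fixed points of $\Free{\Eiso{V}}X$ are configuration spaces of $H$-\emph{orbits} in $\res{G}{H}V$ with $V$-isotropy stabilizers, labeled in $X$, and their group completion remembers all of these orbit types, not just configurations in $V^H$ labeled by $X^H$. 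So after your reductions the two sides would not even match, and no purely non-equivariant form of the approximation theorem can close the argument.

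For comparison, the paper's proof keeps both sides genuinely equivariant. It first reduces to $H=G$ using that all functors and adjunctions involved are parameterized, so that $G$ being $V$-isotropy means $V$ contains a trivial summand. It then treats the case $X=Y_+$: the explicit coproduct formula for free algebras identifies $\left(\Free{\Eiso{V}}Y_+\right)^G$, via point-set models, with the $G$-fixed points of the equivariant configuration space of $V$ labeled in $Y$, and in this form the claim \emph{is} the genuinely equivariant approximation theorem of Hauschild \cite{hau80}, in the formulation of Rourke and Sanderson \cite{rs00} --- a statement comparing $G$-fixed points of equivariant labeled configuration spaces with $G$-fixed points of $\Omega^V\Sigma^V Y_+$, not a fixed-point-wise non-equivariant statement. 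Finally, a general based $X$ is handled by resolving it by spaces of the form $Y_+$ via the $\E{0}$-bar construction and commuting all functors in sight with geometric realizations (\Cref{lem:H-fixed-point-geom-real}, \Cref{lem:monadicity-E-Omega}); your proposal glosses over this reduction, which is needed because Hauschild's result concerns labeled configuration spaces, i.e.\ free algebras on spaces of the form $Y_+$, whereas the free algebra on a general based space involves basepoint identifications. One smaller remark: the commutation of group completion with fixed points that you flag as the hard part is not actually at stake in this statement, since $\GrpCompl{\E{V^H}}$ is applied \emph{after} taking $H$-fixed points; that commutation is the content of \Cref{prop:grpcompl-le-H}, which the paper deduces \emph{from} the present theorem.
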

\begin{proof}
    As all functors and adjunctions used in this statement are parameterized, we can assume, without loss of generality, that $H=G$ and $\Eiso{V}=\E{V}$ because $G$ has $V$-isotropy or, equivalently, $V$ contains a trivial summand. 

    We will first argue, why this formula holds for $X=Y_+$ a based space obtained by adding a disjoint base point.
    In this case, we might use \cite[Rem. 2.72]{ste25} to identify 
    \[ \left( \Free{\Eiso{V}} Y_+ \right)^G \cong \coprod_{A \in \FinN^G} (\E{V}(A) \times \map(A,Y) )_{h\mathrm{Aut}(A)} \] where \[\E{V}(A) = \mathrm{Emb}(\oplus_A V,V) \simeq \mathrm{Conf}_A(V) \,. \] is the space of ordered equivariant configurations $A \to V$. 
    Modeling $Y$ by a $G$-CW-complex and computing the product and the homotopy quotient, which can be computed as a quotient, as the action is free, in the category of topological spaces, we see that $\left( \Free{\Eiso{V}} Y_+\right)^G$ is modeled by the space of unordered equivariant configurations in $V$, labeled in $Y$. This space is homeomorphic to the $G$-fixed points of the configuration space with its induced action, as defined in \cite{rs00}. A proof of this homeomorphism for the non-labeled version can be found in \cite[Prop. 3.2.10]{bqv23} and the same arguments apply in the case with labels.
    The loop space $\Omega^V \Sigma^V Y_+$ can also be modeled by the space of maps of topological spaces from $S^V$ to $\Sigma^V Y_+$. Under those identifications the result appears in \cite{rs00}. It is originally due to Hauschild \cite{hau80}, even though the published version only discusses the case $X=S^0$.

    Finally, we can deduce the based version from the non-based version, as the bar construction for $\E{0}$ provides a presentation of any based space as a geometric realization of spaces of the form $Y_+$. Moreover, all functors in question commute with geometric realizations by \Cref{lem:H-fixed-point-geom-real} and \Cref{lem:monadicity-E-Omega}. 
\end{proof}
\begin{remark}
    We could have avoided the extra step of deducing the formula for based $G$-spaces from the one for based $G$-spaces, as computing the free objects on unbased $G$-spaces is enough to deduce the recognition principle \Cref{thm:recognition-principle}, which in turn implies the approximation theorem \Cref{thm:approximation-theorem} also for based $G$-spaces. However, we decided that it is more natural to provide a proof of the approximation theorem for based $G$-spaces directly. 
\end{remark}
We use the above result to deduce that group completion of $\Eiso{V}$-algebras is computed pointwise:
\begin{proposition} \label{prop:grpcompl-le-H}
    Let $H$ be a $V$-isotropy and let $A$ be an $\Eiso{V}$-algebra. Then the natural map 
    \[ \GrpCompl{\E{V^H}} A^H \longrightarrow \left( \GrpCompl{\Eiso{V}} A \right)^H \] is an equivalence of $\Eiso{V}$-algebras. 
\end{proposition}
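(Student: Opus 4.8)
The plan is to reduce the assertion to free $\Eiso{V}$-algebras, where \Cref{rourke-sanderson} computes everything, and then to bootstrap by a bar resolution. First I would make the comparison map precise. For a $V$-isotropy subgroup $H$, the unit $A \to \GrpCompl{\Eiso{V}} A$ induces on $H$-fixed points a map $A^H \to (\GrpCompl{\Eiso{V}} A)^H$ of $\E{V^H}$-algebras whose target is group-like, since $\GrpCompl{\Eiso{V}} A$ is group-like by construction. This map therefore factors uniquely through the $\E{V^H}$-group completion of its source, and the resulting natural transformation $c_A \colon \GrpCompl{\E{V^H}} A^H \to (\GrpCompl{\Eiso{V}} A)^H$ is the map in the statement.

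Next I would observe that both functors $A \mapsto \GrpCompl{\E{V^H}} A^H$ and $A \mapsto (\GrpCompl{\Eiso{V}} A)^H$ preserve geometric realizations: the former because $(-)^H$ does by \Cref{lem:H-fixed-point-geom-real} and $\GrpCompl{\E{V^H}}$ is a left adjoint, the latter because $\GrpCompl{\Eiso{V}}$ is a left adjoint and $(-)^H$ again preserves geometric realizations by \Cref{lem:H-fixed-point-geom-real}. The forgetful functor from $\Eiso{V}$-algebras to based $G$-spaces, whose left adjoint is $\Free{\Eiso{V}}$, preserves geometric realizations (cf. \Cref{prop:monadicity-forgetful}) and is conservative, because a $G$-symmetric monoidal functor out of $\Diskiso{V}$ is determined up to equivalence by its values on the basic disks $\res{G}{H} V$; hence it is monadic and every $\Eiso{V}$-algebra is the geometric realization of its associated bar resolution by free algebras $\Free{\Eiso{V}} X$. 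Since $c$ is a natural transformation between functors preserving geometric realizations, it now suffices to prove that $c_{\Free{\Eiso{V}} X}$ is an equivalence for every based $G$-space $X$.

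In the free case \Cref{rourke-sanderson} identifies the source $\GrpCompl{\E{V^H}} \bigl( (\Free{\Eiso{V}} X)^H \bigr)$ with $(\Omega^V \Sigma^V X)^H$. The real content is to identify the target compatibly, that is to show that the natural map $\Free{\Eiso{V}} X \to \fgt{\E{V}}{\Eiso{V}} \Omega^V \Sigma^V X$ adjoint to $X \to \Omega^V \Sigma^V X$ exhibits its target as the $\Eiso{V}$-group completion of $\Free{\Eiso{V}} X$, under which $c_{\Free{\Eiso{V}} X}$ becomes the equivalence of \Cref{rourke-sanderson}. The target is group-like, as each of its $V$-isotropy fixed points is a loop space, so the only remaining issue is its universal property.

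This universal property is the main obstacle, and it is exactly here that I would construct the group completion by hand rather than invoke its abstract existence, so as to avoid the circularity of reading $(\GrpCompl{\Eiso{V}} \Free{\Eiso{V}} X)^H$ off the very statement under proof. Concretely, I would define a candidate group completion on all of $\Alg{\Eiso{V}}{\GSpaces}$ by taking the monadic bar resolution of $A$ and replacing each free term $\Free{\Eiso{V}}(-)$ by $\fgt{\E{V}}{\Eiso{V}} \Omega^V \Sigma^V(-)$ along the natural map above, letting $A^{\grp}$ be the geometric realization of the result. It then remains to verify that $A^{\grp}$ is group-like — using that group-likeness is a levelwise condition on the $V$-isotropy fixed points and that these, like group-likeness itself, behave well under the geometric realizations in play — and that $A \to A^{\grp}$ is initial among maps to group-like $\Eiso{V}$-algebras, which I would check on free algebras directly from \Cref{rourke-sanderson} and then propagate along the resolution. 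This identifies $A^{\grp}$ with $\GrpCompl{\Eiso{V}} A$, so that its $H$-fixed points compute $\GrpCompl{\E{V^H}} A^H$ by construction, which is the assertion.
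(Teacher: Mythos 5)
Your overall skeleton matches the paper's: establish the free case via \Cref{rourke-sanderson}, resolve a general algebra by its monadic bar resolution, push everything through geometric realizations, and construct the group completion by hand rather than relying on its abstract existence. You also correctly isolate the crux --- knowing the fixed-point-wise group completions does not by itself identify the $\Eiso{V}$-group completion --- and correctly flag the circularity danger. But your resolution of that crux has a genuine gap. The object you propose, namely ``take $\BarC(A)$ and replace each free term $\Free{\Eiso{V}}(-)$ by $\fgt{\E{V}}{\Eiso{V}}\Omega^V\Sigma^V(-)$ along the natural map,'' is a two-sided bar construction $\BarC\left(\Omega^V\Sigma^V,\,\fgtOne{\Eiso{V}}\Free{\Eiso{V}},\,A\right)$, and in the $\infty$-categorical setting of this paper that phrase is not a construction: you cannot modify a simplicial object levelwise along natural maps and expect the face and degeneracy maps to reassemble. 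To define it you must exhibit $\fgt{\E{V}}{\Eiso{V}}\Omega^V\Sigma^V$ as a \emph{coherent} right module over the free-algebra monad --- the $\infty$-categorical analogue of May's statement that the approximation map is a map of monads --- and you never supply this datum. This is exactly the problem the paper's proof is engineered to avoid: it constructs a delooping $B^V$ abstractly, as the left adjoint (via the adjoint functor theorem) of $\fgt{\E{V}}{\Eiso{V}}\circ\Omega^V \colon (\Spaces^G_\ast)_{\ge V} \to \Alg{\Eiso{V}}{\GSpaces}$, which preserves limits and filtered colimits. The simplicial object you want is then simply the honest functor $\Omega^V B^V$ applied to $\BarC(A)$, with all coherences automatic, and the identification $B^V \Free{\Eiso{V}} X \simeq \Sigma^V X$ follows from uniqueness of left adjoints.

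Second, even granting the construction, your initiality check is circular at its base case. For $A = \Free{\Eiso{V}}X$ you propose to verify ``directly from \Cref{rourke-sanderson}'' that $\Free{\Eiso{V}}X \to \fgt{\E{V}}{\Eiso{V}}\Omega^V\Sigma^V X$ is initial among maps to group-like $\Eiso{V}$-algebras. But \Cref{rourke-sanderson} only says this map is a group completion on each $V$-isotropy fixed point, and upgrading ``pointwise group completion'' to the universal property is precisely the content of \Cref{prop:grpcompl-le-H}, the statement under proof. The non-circular route, which is the paper's, is to never verify initiality object by object: one works with the globally defined natural transformation $\eta_A \colon A \to \Omega^V B^V A$, proves it is a pointwise group completion for \emph{every} $A$ simultaneously (free case by \Cref{rourke-sanderson}, general case by the realization argument you describe), and then observes that a natural transformation whose values are always group-like and which is invertible on group-like objects exhibits $\Omega^V B^V$ as the localization $\GrpCompl{\Eiso{V}}$; the universal property comes from functoriality, and the pointwise computation then \emph{is} the proposition. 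Your outline can be repaired along these lines, but both the module structure and this reorganization of the initiality step are missing as written.
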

\begin{proof}
    We do construct the group completion functor directly. The functor 
    \[ \Omega^V \colon (\Spaces_\ast^G)_{\ge V} \longrightarrow \Alg{\Eiso{V}}{\GSpaces} \] commutes with limits as those are computed pointwise by \Cref{prop:monadicity-forgetful} and with filtered colimits as it does with sifted colimits, which can be seen as in \Cref{lem:monadicity-E-Omega}. 
    (Here, we abbreviated the functor $\fgt{\E{V}}{\Eiso{V}} \circ \Omega^V$ by $\Omega^V$). By \cite[Cor. 5.5.2.9]{lur09}, it hence does admit a left adjoint \[ B^V \colon \Alg{\Eisoext{V}}{\GSpaces} \longrightarrow (\Spaces_\ast^G)_{\ge V} \,. \] 
    We will prove that the unit \[ A \longrightarrow \Omega^V B^V A \] is a group completion on all fixed points. 
    This hence is a natural transformation for which the target always is group-like and an equivalence if the source is group-like. 
    It does follow that $\Omega^V B^V$ does compute the group completion as well as that this group completion is computed pointwise.

    If $A = \Free{\Eiso{V}} X$ for a based $G$-space $X$, we find that $B^V \Free{\Eiso{V}} X \cong \Sigma^V X$, so that the claim is precisely the content of \Cref{rourke-sanderson}.

    We will now write a general $\Eiso{V}$-algebra $A$ as a colimit of free algebras to reduce to this case. As we already know that the forgetful functor is monadic, we can consider the Bar construction $\BarC(A) \colon \Delta^{\op} \to \Alg{\Eiso{V}}{\GSpaces}$ from \cite[Exa. 4.7.2.7]{lur17} which resolves $A$ by free $\Eiso{V}$-algebras, so that $A \cong \colim_{\Delta^{\op}} \BarC(A)$. 
   
    Consider the diagram of $\E{V^H}$-algebras in spaces:
    \[ \begin{tikzcd}
        A^H \ar[r] \ar[d] & \left( \Omega^V B^V A \right)^H \ar[d] \\
        \colim_{\Delta^{\op}} \left( \BarC(A)^H \right) \ar[r] & \colim_{\Delta^{\op}} \left( \left( \Omega^V B^V \BarC(A) \right)^H \right)
    \end{tikzcd} \] where the horizontal arrows are induced by the unit of the adjunction and the vertical once are assembly maps for the colimit. As the left adjoint $B^V$ commutes with all colimits and
    \Cref{lem:H-fixed-point-geom-real} and \Cref{lem:monadicity-E-Omega} say that $\Omega^V$ and $H$-fixed points commute with geometric realizations too, we learn that the vertical morphisms are equivalences. 
    
    Now recall that $\BarC(A)$ is a resolution of $A$ by free algebras, for which we already argued above using \Cref{rourke-sanderson} that the unit is a group completion on $H$-fixed points. 
    We therefore presented the $H$-fixed points of the unit as a colimit of pointwise group completions and conclude that it is a group completion, which finishes the proof.
\end{proof}
\subsection{Group completion for \texorpdfstring{$\Eisoext{V}$}{EieV}-algebras}
In order to describe group completions for $\Eisoext{V}$-algebra, we want to use that such an algebra really just consists of the data of an $\Eiso{V}$-algebra together with a refinement of the underlying $\Eiso{0}$-algebra to an $\E{0}$-algebra. We prove this in the special case of interest for us, for algebras in $G$-spaces. 
\begin{lemma} \label{lem:descr-AlgEisoext}
    Let $\Spaces_\ast^{G,i}$ denote the category of based $G$-spaces with isotropy concentrated in $V$-isotropy subgroups, i.e. presheaves on the subcategory of the orbit category spanned by the corresponding orbits.
    The commutative diagram
    \[ \begin{tikzcd}[sep=huge]     \Alg{\Eisoext{V}}{\GSpaces} \ar[r,"{\fgt{\Eisoext{V}}{\Eiso{V}}}"] \ar[d,swap,"{\fgtOne{\Eisoext{V}}}"] & \Alg{\Eiso{V}}{\GSpaces} \ar[d,"\fgtOne{\Eiso{V}}"]  \\
    \Spaces_\ast^{G} \ar[r,"\mathrm{fgt}"] & \Spaces_\ast^{G,i}
    \end{tikzcd} \] 
    is a pullback.
\end{lemma}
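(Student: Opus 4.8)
The plan is to realise both the source and the target of the comparison functor as monadic categories over $\Spaces_\ast^G$ and to identify the two resulting monads. The square commutes essentially by construction: both composites send an $\Eisoext{V}$-algebra $A$ to the presheaf $K \mapsto A^K$ on $V$-isotropy orbits, so there is a canonical comparison functor $\Phi \colon \Alg{\Eisoext{V}}{\GSpaces} \to P$ into the pullback $P \coloneqq \Alg{\Eiso{V}}{\GSpaces} \times_{\Spaces_\ast^{G,i}} \Spaces_\ast^G$, and the task is to prove that $\Phi$ is an equivalence. By construction $\Phi$ commutes with the forgetful functors to $\Spaces_\ast^G$, namely $\fgtOne{\Eisoext{V}}$ on the source and the projection $\mathrm{pr}$ to the second factor on the target.

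Next I would check that both of these functors are monadic. For $\fgtOne{\Eisoext{V}}$ this is \Cref{prop:monadicity-forgetful}, using that $\colors(\Eisoext{V})$ records the colour $V$ at every orbit, so that $\Fun(\colors(\Eisoext{V}),\GSpaces) \simeq \Spaces_\ast^G$; write $T_{\mathrm{ext}}$ for the associated monad. The projection $\mathrm{pr}$ is conservative because $\fgtOne{\Eiso{V}}$ is (again \Cref{prop:monadicity-forgetful}), and it preserves geometric realizations of split simplicial objects, which in the pullback are computed factorwise; its left adjoint $L$ freely equips the $V$-isotropy part $X|_{V\text{-iso}}$ of a based $G$-space $X$ with an $\Eiso{V}$-algebra structure and leaves the remaining fixed points unchanged, so that $\mathrm{pr}$ is monadic with monad $T_P$ given by the free $\Eiso{V}$-monad on $X|_{V\text{-iso}}$ and the identity on the non-$V$-isotropy fixed points. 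Since $\Phi$ commutes with the two forgetful functors, monadicity reduces the claim to showing that the induced map of monads $T_{\mathrm{ext}} \to T_P$ is an equivalence, i.e.\ that $\Phi$ preserves free algebras.

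To compare the monads I would evaluate $T_{\mathrm{ext}}$ fixed-point-wise using the explicit free-algebra formula of \Cref{prop:formula-Kan-extension}, which presents $\Free{\Eisoext{V}}X$ as the operadic left Kan extension, i.e.\ the value of its underlying based $G$-space at an orbit $G/H$ as a colimit of products of copies of $X$ indexed by the $\Eisoext{V}$-operations into the colour $\res{G}{H}V$. If $H$ is $V$-isotropy, every disk at level $H$ has $V$-isotropy stabilizers, the defining condition of $\Diskisoext{V}$ is vacuous, and these operations agree with those of $\Diskiso{V}$; hence $T_{\mathrm{ext}}(X)^H$ agrees with the free $\Eiso{V}$-algebra on $X|_{V\text{-iso}}$ at $H$, matching $T_P$. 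If $H$ is not $V$-isotropy, the target $\res{G}{H}V$ is non-pure and the defining condition forces every source component of an operation into it to have non-empty fixed points for the non-$V$-isotropy stabilizers occurring in the target; since $\dim V^H = 0$ confines all such centres to the single point $V^H = \{0\}$, no two such disks embed disjointly, so the only operations surviving are the empty operation and the identity. Thus $T_{\mathrm{ext}}(X)^H \simeq X^H$, again matching $T_P$, and therefore $T_{\mathrm{ext}} \to T_P$ is an equivalence and $\Phi$ is an equivalence of categories.

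The main obstacle is the final computation for non-$V$-isotropy $H$: one has to verify carefully that the combinatorial condition defining $\Diskisoext{V}$ really annihilates every higher-arity operation into a non-pure disk — including operations whose sources are lower-dimensional induced disks or are built from norms, for which one uses that passing to a norm preserves the $V$-isotropy type of the stabilizers — and that the space of the surviving unary operation is contractible, so that it contributes exactly $X^H$ rather than $X^H$ together with a spurious factor. Granting this, the two monads agree and the square is a pullback.
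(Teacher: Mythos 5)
Your proposal is correct and takes essentially the same route as the paper: both arguments run the monadicity theorem over based $G$-spaces and reduce the claim to identifying free objects, with the key geometric input in both cases being that for non-$V$-isotropy $H$ the only objects of $(\Diskisoext{V})^H$ admitting an embedding into $\res{G}{H}V$ are the empty disk and $\res{G}{H}V$ itself (via a contractible space of framed embeddings), so that the free $\Eisoext{V}$-algebra has underlying fixed points $X^H$ there. The one step you assert rather than prove --- that the left adjoint of the projection $\mathrm{pr}$ out of the pullback is the free $\Eiso{V}$-algebra on the $V$-isotropy part with the remaining fixed points unchanged --- is precisely what the paper establishes with the explicit pushout square \eqref{square-left-adjoint}, using that $\mathrm{Free}^i$ inserts the point on non-$V$-isotropy fixed points; this verification is routine but should be included for the monad comparison to be complete.
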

\begin{proof}

    
    We want to apply the monadicity theorem. We start by providing a description of the left adjoint to
    \[ \mathrm{pr}_2 \colon \Alg{\Eiso{V}}{\Spaces} \times_{\Spaces_\ast^{G,i}} \Spaces_\ast^G \longrightarrow \Spaces_\ast^G \,. \]
    Let $\mathrm{Free^i} \colon \Spaces_\ast^{G,i} \longrightarrow \Spaces_\ast^G$ denote the fully faithful left adjoint to the forgetful functor.
    
    Given $X$ a based $G$-space, let $F X$ be defined to be the pushout
    \begin{equation} \label{square-left-adjoint} \begin{tikzcd}
         \mathrm{Free}^i \mathrm{fgt}^i X \ar[d] \ar[r] & X \ar[d] \\
         \mathrm{Free}^i \fgtOne{\Eiso{V}}\Free{\Eiso{V}} \mathrm{fgt}^i X \ar[r]  & FX
    \end{tikzcd} \end{equation}
    where the upper map is the counit and the lower map is $\mathrm{Free}^i$ applied to the unit. 
    Then, using that $\mathrm{fgt}^i$ is fully faithful and preserves pushouts, one verifies that the lower map induces an equivalence $\mathrm{fgt}^i FX \cong \fgtOne{\Eiso{V}} \Free{\Eiso{V}} \mathrm{fgt}^i X$, so that $(\Free{\Eiso{V}} \mathrm{fgt}^i X,FX)$ lifts to an object in $\Alg{\Eiso{V}}{\Spaces} \times_{\Spaces_\ast^{G,i}} \Spaces_\ast^G$.
    We claim that the right arrow in the above square exhibits this object as the left adjoint object to $X$ under $\mathrm{pr}_2$. 
    This follows from a computation of mapping spaces in pullback categories which we leave to the reader.

    Let us now apply the monadicity theorem to the following diagram:
    \[  \begin{tikzcd}
        \Alg{\Eisoext{V}}{\GSpaces} \ar[rr,"{\left( \fgt{\Eisoext{V}}{\Eiso{V}},\fgt{\Eisoext{V}}{\E{0}} \right)}"] \ar[dr,swap,"{\fgtOne{\Eisoext{V}}}"] && \Alg{\Eiso{V}}{\GSpaces} \times_{\Spaces_\ast^{G,i}} \Spaces_\ast^G \ar[dl,"{\mathrm{fgt} \circ \mathrm{pr_2}}"] \\
        & \Spaces^G
    \end{tikzcd} \]
    Both functors are monadic by \Cref{prop:monadicity-forgetful}, as the right hand side is the category of algebras over the pushout $\E{0} \leftarrow \Eiso{0} \rightarrow \Eiso{V}$.
    
    Now we need to show that the top map preserves free objects. Let \[ \mathrm{Free} \colon \Spaces^G_\ast \longrightarrow \Alg{\Eiso{V}}{\GSpaces} \times_{\Spaces_\ast^{G,i}} \Spaces_\ast^G  \] denote the adjoint of $\mathrm{pr}_2$ which we constructed above. It is enough to show that for any based $G$-space $X$, the natural map 
    \begin{equation}  \label{final-monadicity-claim}
        \mathrm{Free} (X) \longrightarrow \left( \fgt{\Eisoext{V}}{\Eiso{V}},\fgtOne{\Eisoext{V}} \right) \left( \Free{\Eisoext{V}} X \right) \end{equation} is an equivalence.
    This can be checked on $H$-fixed points for $H$ all subgroups of $G$.
    As all the functors in question come from parameterized adjunctions, we might moreover assume that $H=G$.
    If $G$ itself is $V$-isotropy, the statement becomes trivial as $\Eiso{V}=\Eisoext{V}$. 

    So, we are left with the case where $G$ is not $V$-isotropy, i.e. $V^G = \{0\}$. 
    Here, we use that the left arrow in \eqref{square-left-adjoint} becomes an equivalence after applying $G$-fixed points, as $\mathrm{Free}^{i}$ just inserts the initial object $\ast$ on all fixed points which are not $V$-isotropy. We conclude that $( \mathrm{Free} (X) )^G \simeq X^G$ and unwinding definitions, we are left to show that
    \[ X^G \longrightarrow \left( \Free{\Eisoext{V}} X\right)^G \] is an equivalence. 
    This follows from the formula for the free algebra from \Cref{prop:formula-Kan-extension}, using that in the category $\Diskisoext{V}$, the only $G$-disks which embed into $V$, are the disk itself and the empty disk. 
\end{proof}

Now we are ready to prove that group completion for $\Eisoext{V}$-algebras coincides with group completion on $\Eiso{V}$-algebras on $H$-fixed points for $H$ a $V$-isotropy subgroup and just does not change anything on all other fixed points:
\begin{proposition} \label{prop:Eisoext-grp-compl-is-levelwise}
    Let $A$ be a $\Eisoext{V}$-algebra and $H$ a subgroup of $G$. 
    \begin{itemize}
        \item If $H$ is $V$-isotropy, then the natural map 
        \[ \GrpCompl{\E{V^H}} A^H  \longrightarrow \left( \GrpCompl{\Eisoext{V}} A \right)^H \]
        is an equivalence of $\E{V^H}$-algebras.
        \item If $H$ is not $V$-isotropy, then the unit on $H$-fixed points
        \[ A^H \longrightarrow \left( \GrpCompl{\Eisoext{V}}A \right)^H \] is an equivalence of based spaces.
    \end{itemize}
\end{proposition}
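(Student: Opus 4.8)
The plan is to reduce the statement about $\GrpCompl{\Eisoext{V}}$ to the already-established result about $\GrpCompl{\Eiso{V}}$ from \Cref{prop:grpcompl-le-H}, using the pullback description of $\Alg{\Eisoext{V}}{\GSpaces}$ furnished by \Cref{lem:descr-AlgEisoext}. The key conceptual point is that an $\Eisoext{V}$-algebra is nothing more than an $\Eiso{V}$-algebra $\fgt{\Eisoext{V}}{\Eiso{V}} A$ together with a lift of its underlying $G$-space (restricted to the non-$V$-isotropy orbits) to a genuine based $G$-space, and that group completion only ever touches the $V$-isotropy fixed points. So I expect the group completion of $A$ to be computable by group completing the $\Eiso{V}$-algebra $\fgt{\Eisoext{V}}{\Eiso{V}} A$ (which by \Cref{prop:grpcompl-le-H} happens pointwise on $V$-isotropy fixed points) while leaving the extra $\E{0}$-structure on the non-$V$-isotropy fixed points untouched.

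\textbf{Construction of the candidate.} First I would construct a candidate group completion functor directly, rather than invoking the abstract existence from \Cref{prop:grpcompl-exists}. Given $A \in \Alg{\Eisoext{V}}{\GSpaces}$, apply $\GrpCompl{\Eiso{V}}$ to $\fgt{\Eisoext{V}}{\Eiso{V}} A$ to obtain a group-like $\Eiso{V}$-algebra $B$. By \Cref{prop:grpcompl-le-H}, the unit $\fgt{\Eisoext{V}}{\Eiso{V}} A \to B$ is a pointwise group completion on every $V$-isotropy fixed point and, crucially, an equivalence on the non-$V$-isotropy fixed points (since on those orbits the $\Eiso{V}$-structure carries no data and group completion changes nothing there — this is exactly why $\Diskiso{V}$ was engineered to forget the non-$V$-isotropy data). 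In particular $B$ and $\fgt{\Eisoext{V}}{\Eiso{V}} A$ have the same image in $\Spaces_\ast^{G,i}$, namely $\mathrm{fgt}\,\fgtOne{\Eisoext{V}} A$. Therefore the pair $(B, \fgtOne{\Eisoext{V}} A)$ defines an object of the pullback $\Alg{\Eiso{V}}{\GSpaces} \times_{\Spaces_\ast^{G,i}} \Spaces_\ast^G \cong \Alg{\Eisoext{V}}{\GSpaces}$, which I take to be $\GrpCompl{\Eisoext{V}} A$. The unit map $A \to \GrpCompl{\Eisoext{V}} A$ is the one induced on the pullback by the unit $\fgt{\Eisoext{V}}{\Eiso{V}} A \to B$ together with the identity on $\fgtOne{\Eisoext{V}} A$.

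\textbf{Verifying the universal property.} Once the candidate is in hand, the two bullet points of the statement are immediate by unwinding the pullback: on $V$-isotropy fixed points the candidate restricts to $\GrpCompl{\Eiso{V}}$, whose $H$-fixed points compute $\GrpCompl{\E{V^H}} A^H$ by \Cref{prop:grpcompl-le-H}, giving the first bullet; on non-$V$-isotropy fixed points the candidate equals $\fgtOne{\Eisoext{V}} A$ by construction, giving the second bullet. What remains is to check that this candidate genuinely is the group completion, i.e. initial among maps to group-like $\Eisoext{V}$-algebras. For this I would verify that $(B, \fgtOne{\Eisoext{V}} A)$ is group-like (clear, since $B$ is a group-like $\Eiso{V}$-algebra and group-likeness of an $\Eisoext{V}$-algebra only involves $V$-isotropy fixed points), and then that any map from $A$ to a group-like $\Eisoext{V}$-algebra $C$ factors uniquely through it: applying $\fgt{\Eisoext{V}}{\Eiso{V}}$ and using the universal property of $\GrpCompl{\Eiso{V}}$ produces the factorization on the $\Eiso{V}$-component, while the $\Spaces_\ast^G$-component factors through the identity; uniqueness follows because mapping spaces in the pullback category are computed as fiber products of mapping spaces in the three categories.

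\textbf{Main obstacle.} The main subtlety will be the compatibility of the two components over $\Spaces_\ast^{G,i}$: I must confirm that the unit $\fgt{\Eisoext{V}}{\Eiso{V}} A \to B$ really is an equivalence (not merely a group completion) on the non-$V$-isotropy fixed points, so that the pair assembles coherently in the pullback. This rests on the fact that for non-$V$-isotropy $H$ the forgetful functor $\fgtOne{\Eiso{V}}$ records no data at $H$ and that $\GrpCompl{\Eiso{V}}$ is built from shearing maps indexed only by $V$-isotropy subgroups, so it acts as the identity on the $\Spaces_\ast^{G,i}$-image. Once this compatibility is pinned down, everything else is formal manipulation of the pullback square and the adjunctions already available.
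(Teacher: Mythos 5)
Your overall strategy coincides with the paper's: both construct the group completion explicitly via the pullback description $\Alg{\Eisoext{V}}{\GSpaces} \cong \Alg{\Eiso{V}}{\GSpaces} \times_{\Spaces_\ast^{G,i}} \Spaces_\ast^G$ from \Cref{lem:descr-AlgEisoext}, group-complete the $\Eiso{V}$-component using \Cref{prop:grpcompl-le-H}, and read off both bullet points from the resulting construction. However, there is a genuine gap: your candidate object $(B,\fgtOne{\Eisoext{V}}A)$, with $B = \GrpCompl{\Eiso{V}}\fgt{\Eisoext{V}}{\Eiso{V}}A$, is not an object of the pullback, and the compatibility you assert in your ``main obstacle'' paragraph is exactly backwards. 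The category $\Spaces_\ast^{G,i}$ consists of presheaves on the \emph{$V$-isotropy} orbits, so the functor $\fgtOne{\Eiso{V}}\colon \Alg{\Eiso{V}}{\GSpaces} \to \Spaces_\ast^{G,i}$ records precisely the $H$-fixed points for $V$-isotropy $H$ --- and these are precisely the fixed points that group completion \emph{changes}. Hence $\fgtOne{\Eiso{V}}B$ is the group-completed $V$-isotropy data, while $\mathrm{fgt}^i\,\fgtOne{\Eisoext{V}}A$ is the original data, and these agree only when $A$ is already group-like; there is no canonical equivalence between them to glue the pair. (An $\Eiso{V}$-algebra has no non-$V$-isotropy fixed points at all, so your observation that the unit is ``an equivalence on the non-$V$-isotropy fixed points'' is vacuous and does not supply the needed identification, and your claim that $\GrpCompl{\Eiso{V}}$ ``acts as the identity on the $\Spaces_\ast^{G,i}$-image'' is false.)

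The missing idea is that the $\Spaces_\ast^G$-component must be modified as well: you need a based $G$-space $P$ whose restriction to $V$-isotropy orbits is $\fgtOne{\Eiso{V}}\GrpCompl{\Eiso{V}}A$ (so that the pair assembles in the pullback) but whose non-$V$-isotropy fixed points are those of the original $X = \fgtOne{\Eisoext{V}}A$ (so that your second bullet still holds). The paper produces $P$ as the pushout of the counit $\mathrm{Free}^i(\fgtOne{\Eiso{V}}A) \to X$ along the map $\mathrm{Free}^i(\fgtOne{\Eiso{V}}A) \to \mathrm{Free}^i(\fgtOne{\Eiso{V}}\GrpCompl{\Eiso{V}}A)$ obtained by applying $\mathrm{Free}^i$ to the group-completion unit, where $\mathrm{Free}^i \colon \Spaces_\ast^{G,i} \to \Spaces_\ast^G$ is the left adjoint to restriction. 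Since $\mathrm{fgt}^i\,\mathrm{Free}^i \cong \id$ and $\mathrm{Free}^i$ inserts the point on all non-$V$-isotropy fixed points, this pushout has exactly the two required properties; with this replacement for your candidate, the remainder of your argument (group-likeness, and the universal property checked via mapping spaces in the pullback) goes through essentially as you describe. Without this step, or some equivalent device, the construction does not get off the ground.
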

\begin{proof}
    We will construct the group completion explicitly and check that it has the desired properties.
    For this we use the equivalence 
    \[ \Alg{\Eisoext{V}}{\GSpaces} \cong \Alg{\Eiso{V}}{\GSpaces} \times_{\Spaces_\ast^{G,i}} \Spaces_\ast^G \] from \Cref{lem:descr-AlgEisoext} which restricts to an equivalence  
    \[ \Alggrp{\Eisoext{V}}{\GSpaces} \cong \Alggrp{\Eiso{V}}{\GSpaces} \times_{\Spaces_\ast^{G,i}} \Spaces_\ast^G \,. \]
    Given \[ (A,\varphi \colon \fgtOne{\Eiso{V}}A \cong \mathrm{fgt}^i X,X) \in \Alg{\Eiso{V}}{\GSpaces} \times_{\Spaces_\ast^{G,i}} \Spaces_\ast^G \,, \] we construct its group completion as follows: 
    Its underlying $\Eiso{V}$-algebra is given by $\GrpCompl{\Eiso{V}} A$. Its underlying $G$-space is defined to be the pushout $P$ of
    \[ \begin{tikzcd}
        \mathrm{Free}^i(\fgtOne{\Eiso{V}} A) \ar[d] \ar[r] & X \ar[d] \\
        \mathrm{Free}^i (\fgtOne{\Eiso{V}} \GrpCompl{\Eiso{V}} A) \ar[r] & P
    \end{tikzcd} \] Here $\mathrm{Free}^i \colon \Spaces_\ast^{G,i} \to \Spaces_\ast^G$ denotes the left adjoint to the forgetful functor. The left map is the unit of the group completion adjunction. The upper map is the counit of the forgetful free adjunction, using the equivalence $\varphi \colon \fgtOne{\Eiso{V}}A \cong \mathrm{fgt}^i X$. 

    Moreover, the upper map becomes an equivalence after applying $\mathrm{fgt}^i \colon \Spaces^{G}_\ast \to \Spaces_\ast^{G,i}$, so that the lower map gives an equivalence \[ \bar \varphi \colon \fgtOne{\Eiso{V}} \GrpCompl{\Eiso{V}} A \cong \mathrm{fgt}^i \mathrm{Free}^i \fgtOne{\Eiso{V}} \GrpCompl{\Eiso{V}} A \cong \mathrm{fgt}^i P \,, \] so that we obtain an element \[ (\GrpCompl{\Eiso{V}}A,\bar \varphi,P) \in \Alg{\Eiso{V}}{\GSpaces} \times_{\Spaces_\ast^{G,i}} \Spaces_\ast^G \,. \]

    Now we have maps $X \to P$ and $A \to \GrpCompl{\Eiso{V}} A$ and by construction, a homotopy witnessing compatibility of the equivalences $\varphi$ and $\bar \varphi$ after applying $\mathrm{fgt}^i$ or $\fgtOne{\Eiso{V}}$, respectively, i.e. we constructed a map \[ (A,\varphi,X) \longrightarrow (\GrpCompl{\Eiso{V}}A,\bar \varphi,P) \,. \] Moreover, all this constructions can be made functorially, so that we constructed a natural transformation from any element in $\Alg{\Eiso{V}}{\GSpaces} \times_{\Spaces_\ast^{G,i}} \Spaces_\ast^G$ into a group-like element which is an equivalence if the source was group-like. It follows that this natural transformation exhibits the target as the group completion of the source. It moreover follows that this group completion is an equivalence on fixed points which are not $V$-isotropy and is given by group completion of the underlying $\Eiso{V}$-algebra on the fixed points which are $V$-isotropy. 
    Finally, it follows that this $\Eisoext{V}$-group completion is given by (non-equivariant) group completion on those fixed points by \Cref{prop:grpcompl-le-H}.
\end{proof}
\subsection{The free \texorpdfstring{$\E{V}$}{EV}-algebra on an \texorpdfstring{$\Eisoext{V}$}{EieV}-algebra}
In this section, we will compute the free $\E{V}$-algebra on an $\Eisoext{V}$-algebra $A$. 
The main observation is that the $H$-fixed points $A^H$ for $H$ not $V$-isotropy are acted on by the $H$-fixed points of the equivariant factorization homology $\int_{V \setminus \{0\}} A$ and that this is somehow \enquote{the only additional structure}, as the $H$-fixed points of the free $\E{V}$-algebra on $A$ will be the free $\left(\int_{V \setminus \{0\}} A\right)^H$-space on $A^H$.

\begin{construction} \label{constr:action-of-FH} 
    As already explained in \cite[Cor. 2.2]{lev22}, for $A$ an $\E{V}$-algebra and $H$ a subgroup which is not $V$-isotropy, there is a natural $\left(\int_{V \setminus \{0\}} A \right)$-action on $A$. It can be constructed as follows: The natural $\E{1}$-structure on $\R$ in $\mathrm{Man}$ (one-dimensional manifolds, in that case), gives rise to an $\E{1}$-structure on $V \setminus \{0\} \cong \R \times S(V)$ in $\Man{V}^G$ (even though this product is not a product as framed manifolds, the necessary embeddings are framed). Moreover, the $V$ is a module over $V \setminus \{0 \}$. Applying factorization homology then yields the action.
\end{construction}
\begin{proposition} \label{prop:free-EV-on-EVres}
    Let $A$ be an $\Eisoext{V}$-algebra in $G$-spaces and let $H$ be a subgroup of $G$.
    \begin{itemize}
        \item If $H$ is $V$-isotropy, then the induced map on $H$-fixed points of the unit \[ A^H \longrightarrow \left( \FreeTwo{\E{V}}{\Eisoext{V}} A \right)^H \] is an equivalence of $\E{V^H}$-algebras. 
        \item If $H$ is not $V$-isotropy, then by \Cref{constr:action-of-FH}, $\left(\FreeTwo{\E{V}}{\Eisoext{V}} A\right)^H$ has a natural $\left( \int_{\res{G}{H} V \setminus \{0\}} A \right)^H$-action, so that the unit uniquely extends to a $\left( \int_{\res{G}{H} V \setminus \{0\}} A \right)^H$-equivariant map \[ \left( \int_{\res{G}{H} V \setminus \{0\}} A \right)^H \times A^{H} \longrightarrow \left( \FreeTwo{\E{V}}{\Eisoext{V}} A \right)^{H} \,. \] out of the free $\left( \int_{\res{G}{H} V \setminus \{0\}} A \right)^H$-space on $A^H$. This map is an equivalence.
    \end{itemize}
    
\end{proposition}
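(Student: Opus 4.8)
The plan is to compute both sides through the explicit colimit formula for the relative free functor provided by \Cref{prop:formula-Kan-extension}: it presents the underlying functor of $\FreeTwo{\E{V}}{\Eisoext{V}} A$ as the left Kan extension of $A \colon \Diskisoext{V} \to \GSpaces$ along the inclusion $\Diskisoext{V} \hookrightarrow \Disk{V}$, so that evaluation at the $G$-disk $V$ reads $\left( \FreeTwo{\E{V}}{\Eisoext{V}} A \right)(V) \simeq \colim_{(\Diskisoext{V})_{/V}} A$. Since every functor and adjunction in sight is parameterized, I would first restrict along $H \hookrightarrow G$ and rename $H = G$, replacing $V$ by $\res{G}{H} V$, exactly as in the proofs of \Cref{rourke-sanderson} and \Cref{lem:descr-AlgEisoext}. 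Under this reduction the two bullets become the cases $V^G \neq \{0\}$ and $V^G = \{0\}$.

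The first bullet is then formal. If $G$ is $V$-isotropy then $V^G \subseteq V^K$ shows that every subgroup $K \le G$ is $V$-isotropy, so there are no non-$V$-isotropy subgroups and the morphism restriction defining $\Diskisoext{V} \subseteq \Disk{V}$ is vacuous. Hence $\Eisoext{V} = \E{V}$, the relative free functor is the identity, and the unit is an equivalence, as already observed in the proof of \Cref{lem:descr-AlgEisoext}.

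The content is the second bullet, where $V^G = \{0\}$ and $0$ is the unique $G$-fixed point of $V$. Here I would take $G$-fixed points of the colimit above and split the indexing data into a \emph{central} and a \emph{peripheral} part. In a $G$-fixed configuration of $V$-framed disks in $V$ there is at most one $G$-invariant component, and it must be centred at $0$; every other disk lies in a $G$-orbit inside $V \setminus \{0\}$. Moreover any disk meeting $V \setminus \{0\}$ has only $V$-isotropy isotropy groups, since a point of $V^K \setminus \{0\}$ witnesses $\dim V^K \ge 1$. The central disk therefore contributes its $G$-fixed label, which by \Cref{lem:descr-AlgEisoext} is exactly a point of the based space $A^G$ and carries no factorization-homology structure, while the peripheral disks contribute a $G$-fixed point of $\int_{V \setminus \{0\}} A \simeq \colim_{(\Diskiso{V})_{/V \setminus \{0\}}} A$; this factorization homology is already defined on the $\Eiso{V}$-structure of $A$, which suffices since the relevant disks have $V$-isotropy. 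As $V \setminus \{0\} \simeq \R \times S(V)$ is a collar of the boundary of a central ball, disjointness of the two parts is homotopically inessential, and I would identify the $G$-fixed colimit, after a cofinal comparison, with the product $\left( \int_{V \setminus \{0\}} A \right)^G \times A^G$. Under the $\E{1}$-monoid structure on $V \setminus \{0\}$ and the module structure on $V$ from \Cref{constr:action-of-FH}, this product is precisely the free $\left( \int_{V \setminus \{0\}} A \right)^G$-module on $A^G$, its basepoint accounting for configurations without a central component, and the comparison map of the statement is the canonical one.

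The main obstacle is to make the central/peripheral splitting precise at the level of genuine $G$-fixed points, where colimits and fixed points do not commute. I expect to need an explicit configuration-space model for the $G$-fixed points of this relative free algebra, the analogue for $\Diskisoext{V} \hookrightarrow \Disk{V}$ of the description of $\left( \Free{\Eiso{V}} Y_+ \right)^G$ used in the proof of \Cref{rourke-sanderson} via \cite{ste25} and \cite{rs00}, and then to check by inspection that removing the origin-containing disk realises the stated product and that the induced action is free. The point guaranteeing that the central factor is genuinely decoupled from the periphery is the pullback description of $\Alg{\Eisoext{V}}{\GSpaces}$ in \Cref{lem:descr-AlgEisoext}, which I would use to pin down the $G$-fixed label.
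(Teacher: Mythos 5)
Your plan is correct and is essentially the paper's own proof: reduce to $H=G$ by parameterizedness, note the first bullet is vacuous since all subgroups are then $V$-isotropy so that $\Eisoext{V}=\E{V}$, and compute the $G$-fixed points of the relative free algebra via the colimit formula of \Cref{prop:formula-Kan-extension}, splitting a $G$-fixed configuration into a central disk at the origin and peripheral disks in $V\setminus\{0\}$ whose isotropy is automatically $V$-isotropy. The cofinal comparison you anticipate is implemented in the paper purely categorically — since in $(\Diskisoext{V})^G$ only $\emptyset$ and $V$ admit morphisms to $V$, the functor $-\sqcup(V\xrightarrow{\id}V)\colon \left(\Diskiso{V}\times_{\Disk{V}}\overslice{\Disk{V}}{V}\right)^G \to \left(\Diskisoext{V}\times_{\Disk{V}}\overslice{\Disk{V}}{V}\right)^G$ is a right adjoint inclusion, hence cofinal, giving the product $\left(\int_{V\setminus\{0\}}A\right)^G\times A(V)^G$ directly — so no point-set configuration-space model is needed, and the decoupling you attribute to \Cref{lem:descr-AlgEisoext} really comes from the morphism condition defining $\Diskisoext{V}$ itself.
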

\begin{proof}
    As all the adjunctions in question are parameterized, we might assume that $H=G$. The first part then becomes trivial as $\Eiso{V} = \Eisoext{V}$ in case that $G$ is $V$-isotropy. 

    We might therefore assume that $G$ is not $V$-isotropy.    
    Consider the functor \[ -\sqcup (V \xrightarrow{\id} V) \colon \left(\Diskiso{V} \times_{\Disk{V}} \overslice{\Disk{V}}{V}\right)^G  \longrightarrow \left( \Diskisoext{V} \times_{\Disk{V}} \overslice{\Disk{V}}{V} \right)^G \] adding one copy of $V$ with the identity structure map to $V$. We claim that the functor is cofinal. 
    Indeed, using that in $(\Diskisoext{V})^G$ only the empty disk and $V$ map into $V$, one sees that the functor is fully faithful. Using the same observation, one can also check that every object in the target has an initial morphism to one from the source (adding a disk if necessary), so that the functor is even a right adjoint inclusion.

    Now we apply \Cref{prop:formula-Kan-extension} to find: {\scriptsize
    \begin{align*} 
        &\left( \FreeTwo{\E{V}}{\Eisoext{V}} A \right) (V)  ^G \\
        \cong& \colim\left( \Diskisoext{V} \times_{\Disk{V}} \overslice{\Disk{V}}{V} \to \Diskisoext{V} \xrightarrow{A} \GSpaces \right)^G  \\
        \cong& \colim\left( \left(\Diskisoext{V} \times_{\Disk{V}} \overslice{\Disk{V}}{V}\right)^G \to \left(\Diskisoext{V}\right)^G \xrightarrow{A^G} \Spaces^G \xrightarrow{(-)^G} \Spaces \right) \\
        \cong& \colim\left( \left( \Diskiso{V} \times_{\Disk{V}} \overslice{\Disk{V}}{V} \right)^G \xrightarrow{\sqcup V} \left( \Diskisoext{V} \times_{\Disk{V}} \Disk{V} \right)^G \to (\Diskisoext{V})^G  \xrightarrow{A^G} \Spaces^G \xrightarrow{(-)^G} \Spaces \right) \\
        \cong& \colim\left( \left(\Diskiso{V} \times_{\Disk{V}} \overslice{\Disk{V}}{V} \right)^G \to \left(\Diskiso{V} \right)^G \xrightarrow{A^G} \Spaces^G \xrightarrow{(-)^G} \Spaces \xrightarrow{\times A(V)^G} \Spaces \right) \\
        \cong & \colim\left( \left( \Diskiso{V} \times_{\Disk{V}} \overslice{\Disk{V}}{V} \right)^G \to \left( \Diskiso{V}\right)^G \xrightarrow{A^G} \GSpaces \xrightarrow{(-)^G}\right) \times A(V)^G \\
        \cong& \left( \int_{V \setminus \{0\}} A\right)^G  \times A(V)^G
    \end{align*} }
    Here, we also used the fact \cite[Prop. 5.5]{sha23} that the $G$-fixed points of a $G$-colimit of a functor $\CC \to \GSpaces$ is naturally equivalent to the colimit of $\CC^G \to \GSpaces^G \to \Spaces$ where the last functor is taking $G$-fixed points.

   For the last isomorphism, we also used that the isotropy groups of $V \setminus \{0\}$ are $V$-isotropy and, moreover, any embedding of a $V$-isotropy disks into $V$ automatically lands in $V \setminus \{0\}$. It follows that the inclusion $V \setminus \{0\} \hookrightarrow V$ induces an equivalence \[ \left( \overslice{\Disk{V}}{V \setminus \{0\}} \right)^G \simeq  \left( \Diskiso{V} \times_{\Disk{V}} \overslice{\Disk{V}}{V} \right)^G \,. \] The colimit over the left hand side is, again by \cite[Prop. 5.5]{sha23}, the fixed points of the equivariant factorization homology. The inclusion $V \setminus \{0\} \to V$ is also the one used to define the action of the factorization homology on the fixed points in \Cref{constr:action-of-FH}, so the isomorphism is the one induced by the action.
\end{proof}
\subsection{Assembling the argument}
We are now ready to prove the approximation theorem by computing the free group-like $\E{V}$-algebra on a based space $X$. As explained in the proof strategy, we will first compute the free $\Eisoext{V}$-algebra on $X$, then determine its group completion using \Cref{prop:Eisoext-grp-compl-is-levelwise} and the result of Hauschild from \Cref{rourke-sanderson}. Then, we will compute the free $\E{V}$-algebra on this free group-like $\Eisoext{V}$-algebra using \Cref{prop:free-EV-on-EVres} and equivariant nonabelian Poincaré duality, \Cref{thm:NAPD}. 
\begin{theorem}[Approximation theorem] \label{theorem:approximation-theorem}
    For $X$ a based $G$-space, the natural map
    \[ \Free{\E{V}} X \longrightarrow \Omega^V \Sigma^V X \] from the free $\E{V}$-algebra on $X$ to the $V$-fold loop space of the $V$-fold suspension of $X$ exhibits the source as the group completion of the target. 
\end{theorem}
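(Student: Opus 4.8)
The plan is to identify the target $\Omega^V \Sigma^V X$ with the \emph{free group-like $\E{V}$-algebra} on $X$, which is exactly the group completion $\GrpCompl{\E{V}} \Free{\E{V}} X$ of the source. Since $\Omega^V \Sigma^V X$ is group-like (its $H$-fixed points are honest loop spaces, hence group-like, whenever $\dim V^H \ge 1$), the natural map factors canonically through the group completion, and it suffices to prove this comparison is an equivalence. My first step is to factor the free group-like functor through the intermediate operad $\Eisoext{V}$. Because group-likeness of an $\E{V}$-algebra is a condition only on its $V$-isotropy fixed points, where the $\E{V}$- and $\Eisoext{V}$-structures coincide, a group-like $\E{V}$-algebra is the same thing as an $\E{V}$-algebra whose underlying $\Eisoext{V}$-algebra is group-like. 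Combined with \Cref{prop:free-EV-on-EVres}, which shows that $\FreeTwo{\E{V}}{\Eisoext{V}}$ does not change $V$-isotropy fixed points and hence preserves group-likeness, this lets me write the free group-like $\E{V}$-algebra as the composite of three left adjoints
\[ X \longmapsto A := \GrpCompl{\Eisoext{V}} \Free{\Eisoext{V}} X \longmapsto G(X) := \FreeTwo{\E{V}}{\Eisoext{V}} A, \]
so that the problem reduces to computing $G(X)^H$ for each subgroup $H$ and checking that the natural comparison to $(\Omega^V \Sigma^V X)^H$ is an equivalence.

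Second, I would handle the $V$-isotropy subgroups $H$. Here \Cref{prop:free-EV-on-EVres} gives $G(X)^H \simeq A^H$, and \Cref{prop:Eisoext-grp-compl-is-levelwise} identifies $A^H$ with $\GrpCompl{\E{V^H}} (\Free{\Eisoext{V}} X)^H$. Since the operad $\Eiso{V}$ only sees the $V$-isotropy orbits, the underlying algebra of $\Free{\Eisoext{V}} X$ agrees with $\Free{\Eiso{V}} X$ on these fixed points, and so Hauschild's approximation result \Cref{rourke-sanderson} identifies this group completion with $(\Omega^V \Sigma^V X)^H$. Tracing the comparison map then shows it is an equivalence on all $V$-isotropy fixed points.

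Third, and this is where the content lies, I would treat the subgroups $H$ that are not $V$-isotropy. On such fixed points neither group completion nor the free $\E{V}$-step changes anything, so $A^H \simeq X^H \simeq (\Sigma^V X)^H$, and \Cref{prop:free-EV-on-EVres} presents $G(X)^H$ as the free module on $X^H$ over the monoid $\left( \int_{\res{G}{H} V \setminus \{0\}} A \right)^H$. As every isotropy group of $V \setminus \{0\}$ is $V$-isotropy, this factorization homology depends only on the $V$-isotropy fixed-point data of $A$, which agrees with $\Omega^V \Sigma^V X$; and since $\Sigma^V X$ is $V$-connective, equivariant nonabelian Poincaré duality \Cref{thm:NAPD} identifies it with $\map_\ast\left( (\res{G}{H} V \setminus \{0\})^+, \Sigma^V X \right)^H$. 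Using $\res{G}{H} V \setminus \{0\} \cong \R \times S(\res{G}{H} V)$, whose one-point compactification is $\Sigma(S(\res{G}{H} V)_+)$, this monoid becomes $\Omega\, \map_\ast\left( S(\res{G}{H} V)_+, \Sigma^V X \right)^H$.

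Finally, I would match this against the loop space through the cofiber sequence $S(\res{G}{H}V)_+ \to S^0 \to S^{\res{G}{H}V}$. Mapping into $\Sigma^V X$ and taking $H$-fixed points yields a fibre sequence whose fibre is $(\Omega^V \Sigma^V X)^H$, whose total space is $(\Sigma^V X)^H \simeq X^H$ (as $V^H = \{0\}$), and whose base has loop space exactly the monoid computed above. The crux, and the step I expect to be the main obstacle, is to show that this fibration is trivial \emph{and} that the resulting splitting is precisely the free-module equivalence of \Cref{prop:free-EV-on-EVres}. The triviality itself should follow from the cell-induction argument already used in \Cref{prop:monadicity-OmegaV}: the sphere $S(\res{G}{H}V)$ admits an $H$-cell structure with $K$-cells of dimension at most $\dim V^K - 1$, so $V$-connectivity of $\Sigma^V X$ forces the classifying map $(\Sigma^V X)^H \to \map_\ast(S(\res{G}{H}V)_+, \Sigma^V X)^H$ to be null. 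The delicate point is to upgrade this nullity to a splitting compatible with the factorization-homology action, so that the \emph{natural} comparison map sends the free module $\Omega\,\map_\ast(\cdots)^H \times X^H$ onto the fibre $(\Omega^V \Sigma^V X)^H$ by an equivalence, rather than merely producing an abstract identification of the two spaces.
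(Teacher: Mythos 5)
Your proposal follows the paper's own proof nearly step for step: the same factorization of the free group-like functor through $\Eisoext{V}$, the same treatment of the $V$-isotropy fixed points via \Cref{rourke-sanderson} and \Cref{prop:Eisoext-grp-compl-is-levelwise}, and the same use of \Cref{prop:free-EV-on-EVres} plus nonabelian Poincar\'e duality (\Cref{thm:NAPD}) to identify the acting monoid on non-$V$-isotropy fixed points with $\Omega \map_\ast(S(\res{G}{H}V)_+,\Sigma^V X)^H$. The divergence is in the final step, and there your argument has a genuine gap. First, the cell-induction argument from \Cref{prop:monadicity-OmegaV} does not deliver the nullhomotopy you claim: it shows that each individual map $S(\res{G}{H}V) \to \Sigma^V X$ is equivariantly nullhomotopic, i.e.\ that the space $\map_\ast(S(\res{G}{H}V)_+,\Sigma^V X)^H$ is \emph{connected}, but a map $X^H \to \map_\ast(S(\res{G}{H}V)_+,\Sigma^V X)^H$ into a connected space need not be nullhomotopic. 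The obstructions to a nullhomotopy of the whole family involve homotopy classes of maps $\Sigma^j (X^H_+) \to (\Sigma^V X)^K$ with $j \le \dim V^K - 1$, and since $X^H$ has cells in arbitrary dimensions these groups do not vanish just because $(\Sigma^V X)^K$ is $(\dim V^K - 1)$-connected. The nullhomotopy that actually exists is not an obstruction-theoretic consequence of $V$-connectivity: it uses the specific radial structure of $\Sigma^V X$, namely that $X^H = (\Sigma^V X)^H$ sits at the origin of the suspension coordinate and can be pushed to the basepoint by expanding radially along $S(V)$ to infinity.

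Second, and more importantly, you yourself flag as \enquote{the delicate point} the need to upgrade any such splitting to the statement that the \emph{specific} comparison map of \Cref{prop:free-EV-on-EVres} --- the action of the factorization homology on the unit, which after unwinding naturality of Poincar\'e duality is induced by the collapse map $S^V \to S^V \vee \Sigma S(\res{G}{H}V)_+$ --- is an equivalence, and you do not resolve it. This is not a formality that follows from an abstract identification of the two spaces; it is exactly the geometric content the paper imports from the literature: Hauschild's splitting theorem, in the precise form of Rourke and Sanderson \cite[Thm.~4]{rs00}, where it is checked that the splitting homotopy equivalence is the map described above. Without this input (or an independent proof of it), your argument is incomplete precisely at the point that distinguishes the case $V^H = \{0\}$ from the previously known cases, which is where the main difficulty of the theorem lies.
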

\begin{proof}
Consider the following commutative diagram of forgetful functors and inclusions of subcategories:
\[ \begin{tikzcd}
    \Alg{\E{V}}{\GSpaces}  \ar[r] &  \Alg{\Eisoext{V}}{\GSpaces} \ar[r] & \Alg{\E{0}}{\GSpaces} \\
    \Alggrp{\E{V}}{\GSpaces} \ar[r] \ar[u,hook] & \Alggrp{\Eisoext{V}}{\GSpaces} \ar[u,hook]
\end{tikzcd}
\]
It follows from the description in \Cref{prop:free-EV-on-EVres} that the free $\E{V}$-algebra on a group-like $\Eisoext{V}$-algebra is automatically group-like. We conclude that the adjunction between forgetful and free functor of $\E{V}$ and $\Eisoext{V}$ restricts to an adjunction on the subcategories of the respective group-like objects. Therefore, passing to left adjoints in the above diagram yields a natural equivalence 
\[ \GrpCompl{\E{V}} \Free{\E{V}} X \cong \FreeTwo{\E{V}}{\Eisoext{V}} \GrpCompl{\Eisoext{V}} \Free{\Eisoext{V}} X \,. \] 

We can hence rephrase the theorem, asking whether the natural map 
\begin{equation} \label{comparison} 
\FreeTwo{\E{V}}{\Eisoext{V}} \GrpCompl{\Eisoext{V}} \Free{\Eisoext{V}} X \longrightarrow \Omega^V \Sigma^V X\end{equation} is an equivalence which is true if it is an equivalence on all fixed points. 
 
We start by showing that it is an equivalence on $H$-fixed points for $H$ being $V$-isotropy.
Recall from \Cref{rourke-sanderson} that the natural map
\[ \Free{\Eiso{V}} X \longrightarrow \fgt{\E{V}}{\Eiso{V}} \Omega^V \Sigma^V X \] is a group completion on $H$-fixed points for $H$ being $V$-isotropy. 
As $(\Diskiso{V})^H \hookrightarrow (\Diskisoext{V})^H$ is an equivalence for all $V$-isotropy $H$, we know that $\FreeTwo{\Eisoext{V}}{\Eiso{V}}(-)$ does not change those $H$-fixed points, so that \[ \Free{\Eisoext{V}} X \longrightarrow \fgt{\E{V}}{\Eisoext{V}} \Omega^V \Sigma^V X \] is a group completion on those $H$-fixed points, too. 
Now we can deduce from \Cref{prop:Eisoext-grp-compl-is-levelwise} that the natural map \[ \GrpCompl{\Eisoext{V}} \Free{\Eisoext{V}} X \longrightarrow \fgt{\E{V}}{\Eisoext{V}} \Omega^V \Sigma^V X \] is an equivalence on those $H$-fixed points, too. From \Cref{prop:free-EV-on-EVres} we finally deduce that the same holds for the map \eqref{comparison}.


Now we will turn towards the $H$-fixed points where $H$ is not $V$-isotropy.
In this step we will combine equivariant nonabelian Poincaré duality with a splitting of $(\Omega^V \Sigma^V X)^G$ due to Hauschild \cite{hau77}. We will again restrict to the case $H=G$, the other cases following from this by using that the free functor is a parameterized functor and hence restricts to a free functor on subgroups.

By \Cref{prop:free-EV-on-EVres}, we must verify that 
\[ X^G \to (\Omega^V \Sigma^V X)^G \] exhibits the target as the free space with an action of
\[ \left( \int_{V \setminus \{0\}} \Omega^V \Sigma^V X \right)^G \cong \Omega \map_\ast(S(V)_+,\Sigma^V X)^G \,, \] where the latter equivalence is due to \Cref{thm:NAPD}.
Let us unwind this acting on $(\Omega^V \Sigma^V X)^G$.
As nonabelian Poincaré duality is natural, this action is equivalently described as follows: The action of $V \setminus \{0\}$ on $V$ in $\Man{V}^G$ gives rise to an action of $V \setminus \{0\} ^+ \cong \Sigma S(V)_+$ on $V^+ = S^V$ in $(\Spaces_\ast^G)^\vee$. In other words, $S^V$ is a comodule over the coalgebra $\Sigma S(V)_+$ in based spaces. Finally, mapping out of those spaces yields the action of the algebra \[ \map_\ast(\Sigma S(V)_+,\Sigma^V X)^G \cong \Omega \map_\ast(\Sigma S(V)_+,\Sigma^V X)^G \] on $(\Omega^V \Sigma^V X)^G$.

This means that we need to check that the composite 
\begin{align*} 
    X^G \times \Omega \map_\ast(S(V)_+,\Sigma^V X)^G \longrightarrow& (\Omega^V \Sigma^V X)^G \times \Omega\map_\ast(S(V)_+,\Sigma^V X)^G\\
    \cong& \map_\ast(S^V \vee \Sigma S(V)_+,\Sigma^V X)^G \\
    \longrightarrow& (\Omega^V \Sigma^V X)^G 
    \end{align*} is an equivalence, where the first map is induced by the unit, and the second by the collapse map $S^V \to S^V \vee \Sigma S(V)_+$. Modeling $X$ by a $G$-CW complex, we can model the mapping spaces by taking mapping spaces in the category of based topological $G$-spaces. Using that $\Sigma S(V)_+ = S^V /S^0$, we can furthermore model 
    \[ \map(\Sigma S(V)_+,\Sigma^VX) \] by those maps $S^V \to \Sigma^VX$ which are sent to the base point in a neighborhood of the north and south pol.
    In this language, this splitting is due to Hauschild \cite{hau77}, proven in this precise form in \cite[Thm. 4]{rs00}. They only state that there is a splitting $(\Omega^V \Sigma^V X)^G \cong X^G \times Z$ for some group $Z$. However, in the proof \cite[pp. 11]{rs00}, they do check that $Z = \map_\ast(S(V)_+,\Sigma^V X)$ and that the homotopy equivalence $v \colon Z \times X^G \to (\Omega^V \Sigma^V X)^G$ is the one described above.  
This finishes the proof.
\end{proof}
\section{The recognition principle}
In this section, we will deduce the recognition principle from the approximation theorem, using the monadicity theorem.
\begin{proposition} \label{prop:grpcore}
    The inclusion 
    \[ \Alggrp{\E{V}}{\GSpaces} \longrightarrow \Alg{\E{V}}{\GSpaces} \]
    admits a right adjoint. 
\end{proposition}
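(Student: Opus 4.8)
The plan is to realize the inclusion as a colimit-preserving functor between presentable $\infty$-categories and then invoke the adjoint functor theorem \cite[Cor. 5.5.2.9]{lur09}, which produces the desired right adjoint. Two things must be checked: that $\Alggrp{\E V}{\GSpaces}$ is presentable, and that the inclusion into $\Alg{\E V}{\GSpaces}$ preserves all small colimits.

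Presentability is immediate from what we already know: by (the proof of) \Cref{prop:grpcompl-exists} the group-like objects are the local objects for the small set of shearing maps on fixed points, so the inclusion is an accessible reflective localization of the presentable $\infty$-category $\Alg{\E V}{\GSpaces}$, and such a localization is again presentable.

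The substance of the argument is therefore to show that group-like $\E V$-algebras are closed under small colimits. Writing an arbitrary colimit as the geometric realization of the simplicial bar object whose simplices are coproducts of the objects of the diagram, it suffices to treat two cases: geometric realizations and small coproducts of group-like algebras. For a $V$-isotropy subgroup $H$ the group-like condition is that $\pi_0(A^H)$ be a (commutative, if $\dim V^H \ge 2$) group, so in both cases I would argue through the functors $A \mapsto \pi_0(A^H)$ into monoids, using the elementary fact that any colimit of groups formed in monoids is again a group (a quotient of, or a filtered colimit of, groups is a group, and the coproduct of groups in monoids is their free product, which is a group). Since $\pi_0$ preserves colimits, and for geometric realizations \Cref{lem:H-fixed-point-geom-real} shows that $(-)^H$ does as well, the object $\pi_0\bigl((\operatorname{colim}_{\Delta^{\op}} A_\bullet)^H\bigr) \cong \operatorname{colim}_{\Delta^{\op}} \pi_0(A_\bullet^H)$ is a reflexive coequalizer of groups and hence a group.

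This leaves the coproduct case, which I expect to be the main obstacle, since colimits of $\E V$-algebras are not computed on underlying $G$-spaces and genuine fixed points need not obviously commute with the coproduct of algebras. The key claim to establish is that
\[ (-)^H \colon \Alg{\E V}{\GSpaces} \longrightarrow \Alg{\E{V^H}}{\Spaces} \]
preserves coproducts (indeed all colimits); granting it, $\pi_0\bigl((\coprod_i A_i)^H\bigr) \cong \coprod_i \pi_0(A_i^H)$ is a coproduct of groups and hence a group. To prove the claim I would express the coproduct of $\E V$-algebras through the operadic left Kan extension formula of \Cref{prop:formula-Kan-extension}, exhibiting its underlying $G$-space as a parameterized colimit over a slice of the envelope $\Disk V$, and then commute $(-)^H$ past this colimit using \cite[Prop. 5.5]{sha23}, exactly as in the fixed-point computations of \Cref{prop:free-EV-on-EVres}. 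Combining the two cases shows that the inclusion preserves small colimits, which together with presentability completes the argument.
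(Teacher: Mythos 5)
Your reduction of arbitrary colimits to geometric realizations and coproducts is fine, and the realization case does go through: \Cref{lem:H-fixed-point-geom-real} plus the fact that a reflexive coequalizer of groups computed in monoids is a group (the inclusion of groups into monoids has a right adjoint, the units) handles it. The gap is exactly where you anticipated trouble: the key claim that $(-)^H \colon \Alg{\E{V}}{\GSpaces} \to \Alg{\E{V^H}}{\Spaces}$ preserves coproducts is \emph{false}, and it fails because of the norm maps that distinguish genuine $G$-operads from naive ones. Coproducts of algebras are not computed on underlying $G$-spaces, so the fact that genuine fixed points preserve all colimits of $G$-spaces does not help; and if you run the computation you propose via \Cref{prop:formula-Kan-extension} and \cite[Prop. 5.5]{sha23}, the indexing category computing $(A \sqcup B)^H$ is the $H$-fixed category of a slice of $\Disk{V}$, whose objects include disks with free (and other non-fixed) orbits. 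These contribute norm classes $\Nm{H}{K}(a)$ with $a \in \pi_0(A^K)$, which interact with $\pi_0(B^H)$ in ways that simply do not exist in $A^H \sqcup B^H$. Concretely, take $G = C_2$, $V = \R \oplus \sigma$ with $\sigma$ the sign representation (so every subgroup is $V$-isotropy and $\Eiso{V} = \E{V}$), and $A = B = \Free{\E{V}}\left( (C_2)_+ \right)$, so that $A \sqcup B \cong \Free{\E{V}}\left( (C_2)_+ \vee (C_2)_+ \right)$. The free-algebra formula used in the proof of \Cref{rourke-sanderson} identifies $\pi_0\left( (A \sqcup B)^{C_2} \right)$ with the free \emph{commutative} monoid on four generators: all configuration points lie on free orbits, and in the two-dimensional space $V$ two free orbits can be moved past one another without collision, so their classes commute. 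On the other hand, $\pi_0\left( A^{C_2} \sqcup B^{C_2} \right)$ is the free product, as monoids, of two free commutative monoids on two generators, which is noncommutative. Hence the comparison map is not an equivalence, and the same norm cross-terms persist for group-like algebras, so there is no retreat to the group-like case.

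Note that the statement you were trying to reduce to, namely that group-like algebras are closed under coproducts in $\Alg{\E{V}}{\GSpaces}$, is in fact true, but in the paper's logic it is a \emph{consequence} of \Cref{prop:grpcore} (a fully faithful inclusion admitting a right adjoint preserves colimits); proving it directly would require an independent argument, for instance that norm maps are unital and multiplicative on $\pi_0$, so that $\pi_0\left((A \sqcup B)^H\right)$ is generated by invertible elements---and nothing in the paper supplies this. The paper's proof avoids colimits entirely and instead constructs the right adjoint explicitly: first for $\E{V}$-algebras in discrete $G$-spaces, where $\grpcore{A}$ is obtained by restricting each $A^H$ to the elements whose images in $A^K$ are invertible for every $V$-isotropy subconjugate $K$; then for a general algebra $A$ as the pullback of $\grpcore{\pi_0(A)} \to \pi_0(A) \leftarrow A$ along the unit of the adjunction $\pi_0 \colon \GSpaces \rightleftarrows \Set$, checking directly that $\grpcore{A} \to A$ is an equivalence on group-like objects with group-like source. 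So while your adjoint-functor-theorem strategy is reasonable in outline, the colimit-preservation step is precisely where the genuine-equivariant subtlety lives, and your proposed proof of it does not work.
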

\begin{proof}
    We first show that the inclusion 
    \[ \Alggrp{\E{V}}{\Set} \longrightarrow \Alg{\E{V}}{\Set} \] 
    admits a right adjoint where $\Set \subset \GSpaces$ is the $G$-symmetric monoidal subcategory of $G$-spaces which is given by spaces which are discrete on all fixed points. 

    Given $A \in \Alg{\E{V}}{\Set}$, let $\grpcore{A}$ denote the $\E{V}$-algebra obtained from $A$ by passing on $H$-fixed points $A^H$ to the subset of elements for which the image in $A^K$ for any subconjugate $K$ of $H$ which is $V$-isotropy is invertible in $A^K$. The $\E{V}$-structure of $A$ does restrict to one on $\grpcore{A}$. Moreover, the inclusion $\grpcore{A} \to A$ gives a natural transformation which is the identity if $A$ is group-like and the source always is group-like. It hence exhibits $\grpcore{(-)}$ as the right adjoint to the inclusion.

    Now for $A \in \Alg{\E{V}}{\GSpaces}$, let $\grpcore{A}$ denote the pullback 
    \[ \begin{tikzcd}
        \grpcore{A} \ar[d] \ar[r] & A \ar[d] \\
        \grpcore{\pi_0(A)} \ar[r] & \pi_0(A)
    \end{tikzcd}\]
    where the vertical morphisms are induced by the unit $A \to \pi_0(A)$ of the left adjoint $\pi_0 \colon \GSpaces \to \Set$ (which is $G$-symmetric monoidal). 

    Using that pullbacks can be computed pointwise by \Cref{prop:monadicity-forgetful}, one verifies that $\grpcore{A} \to A$ again is a natural transformation which is an equivalence on group-like objects and the source always is group-like, exhibiting $\grpcore{(-)}$ as the right adjoint to the inclusion.
\end{proof}
\begin{proof}[Proof of \Cref{thm:recognition-principle}]
    We actually start by showing that $\Omega^V \colon (\Spaces^G_\ast)_{\ge V} \to \Alggrp{\E{V}}{\GSpaces}$ is an equivalence. 
    We will use the monadicity theorem \cite[Cor. 4.7.3.16]{lur17} applied to the following situation 
    \[ \begin{tikzcd}
        (\Spaces^G_\ast)_{\ge V} \ar[rr,"\Omega^V"] \ar[dr,swap,"\Omega^V"] & & \Alggrp{\E{V}}{\GSpaces} \ar[dl,"\fgtOne{\E{V}}"] \\
        & \Spaces^G
    \end{tikzcd} \,. \]
    We need to verify that both functors to $\Spaces^G$ are monadic and that the third functor preserves free objects. 

    The left hand map is monadic by \Cref{prop:monadicity-OmegaV}. The forgetful functor $\Alg{\E{V}}{\GSpaces} \to \Spaces^G$ is monadic by \Cref{prop:monadicity-forgetful}. Since the inclusion $\Alggrp{\E{V}}{\GSpaces} \to \Alg{\E{V}}{\GSpaces}$ does commute with all colimits by \Cref{prop:grpcore} and also is conservative right adjoint by \Cref{prop:grpcompl-exists}, the right arrow in the above diagram is monadic, too.

    Finally, the comparison map between free objects is an equivalence by the approximation theorem, using that the two functors to $\Spaces^G$ factor through the forgetful functor $\Spaces^G_\ast \to \Spaces^G$ which admits a left adjoint, i.e. we apply \Cref{theorem:approximation-theorem} to the case where $X$ arises from a non-based $G$-space by adding a disjoint base point. It follows that \[ \Omega^V \colon (\Spaces^G_\ast)_{\ge V} \longrightarrow \Alggrp{\E{V}}{\GSpaces} \] is an equivalence, as desired.

    As limits are computed pointwise, the inclusion $(\Spaces_\ast^G)_{\ge V} \subset \Spaces_\ast^G$ admits a right adjoint, the $V$-connective cover. As $S^V$ is $V$-connective, the functor $\Omega^V \colon \Spaces^G_\ast \to \Alg{\E{V}}{\GSpaces}$ factors through this $V$-connective cover functor.
    
    It follows from composing left adjoints that $\Omega^V \colon \Spaces^G_\ast \longrightarrow \Alg{\E{V}}{\GSpaces}$ admits a left adjoint $B^V \colon \Alg{\E{V}}{\GSpaces} \to \Spaces_\ast^G$ with the desired properties stated in \Cref{thm:recognition-principle} which is given by first group completing, then using the equivalence between group-like $\E{V}$-algebras and $V$-connective based $G$-spaces established above and then finally regarding the resulting $V$-connective based $G$-spaces as just a based $G$-spaces. We will however describe this adjoint more explicitly in the upcoming \Cref{rem:description-BV}
\end{proof}
\begin{remark} \label{rem:description-BV}
    Given an $\E{V}$-algebra $A$, recall its bar complex $\BarC(A) \colon \Delta^{\op} \to \Alg{\E{V}}{\GSpaces}$ given by resolving $A$ by free algebras, that is \[ \BarC(A)_n = \left( \Free{\E{V}} \right)^n (\fgtOne{\E{V}} A) \] as $B^V$ is a left adjoint we have 
    \[ B^V A \cong B^V \colim_{\Delta^{\op}} \BarC(A) \cong \colim_{\Delta^{\op}} B^V \BarC(A) \] where
    \[ \left(B^V \BarC(A)\right)_n = \left(B^V \circ \left(\Free{\E{V}}\right)^n \right) \left(\fgtOne{\E{V}}A \right) \cong \Sigma^V \left(\Free{\E{V}}\right)^{n-1} (\fgtOne{\E{V}} A) \,, \] that is we can compute the delooping $B^V$ using a two-sided bar construction.
\end{remark}
\printbibliography
\end{document}